\theoremstyle{definition}
\newtheorem{thm}{Theorem}
\newtheorem*{thm*}{Theorem}
\newtheorem{lem}[thm]{Lemma}
\newtheorem{defi}[thm]{Definition} 
\newtheorem{cor}[thm]{Corollary}
\newtheorem{rem}[thm]{Remark}
\numberwithin{equation}{section}
\numberwithin{thm}{section}
\newcommand\myshade{85}
\colorlet{mylinkcolor}{red}
\colorlet{mycitecolor}{blue}
\colorlet{myurlcolor}{Aquamarine}
\newcommand{\tzl}[1]{\scriptsize{$#1$}}
\newcommand{\la}[1]{\mathfrak{#1}}
\newcommand{\ZZ}{\mathbb{Z}}
\newcommand{\QQ}{\mathbb{Q}}
\newcommand{\CC}{\mathbb{C}}
\newcommand{\cP}{\mathcal{P}}
\newcommand{\cR}{\mathcal{R}}
\newcommand{\cU}{\la{U}}
\newcommand{\sO}{\mathscr{O}}
\newcommand{\sW}{\mathscr{W}}
\newcommand{\hI}{\widehat{I}}
\newcommand{\res}{\mathrm{Res}}
\newcommand{\lz}{\mathopen{\llbracket}}
\newcommand{\rz}{\mathclose{\rrbracket}}
\newcommand{\ourla}{\la{sl}_{2l+1}}
\newcommand{\ourlev}{-l-\frac{1}{2}}
\newcommand{\vac}{\mathbf{1}}
\newcommand{\re}{\mathrm{re}}
\newcommand{\im}{\mathbbm{i}}
\newcommand{\ad}{\mathrm{ad}}
\newcommand{\rs}{\mathrm{short}}
\newcommand{\ri}{\mathrm{intermediate}}
\newcommand{\rl}{\mathrm{long}}
\newcommand{\modn}{\cU(\la{g})\la{n}_+}
\newcommand{\modnz}{\cU(\la{g}^0)\la{n}_+^0}
\begin{document}

\date{}
\title{$A_{2l}^{(2)}$ at level $-l-\frac{1}{2}$}
\author{Shashank Kanade}
\address{University of Denver, Denver, USA} 
\email{\texttt{shashank.kanade@du.edu}}
\thanks{I am extremely grateful to T.\ Creutzig, A.\ Linshaw and D.\ Ridout for many discussions. 
	I am currently supported by Simons Foundation's Collaboration Grant for Mathematicians \#636937.}

\begin{abstract}
  Let $L_{l}=L(\la{sl}_{2l+1},-l-\frac{1}{2})$ be the simple vertex
  operator algebra based on the affine Lie algebra
  $\widehat{\la{sl}}_{2l+1}$ at boundary admissible level
  $-l-\frac{1}{2}$.

  We consider a lift $\nu$ of the Dynkin diagram involution of
  $A_{2l}=\la{sl}_{2l+1}$ to an involution of $L_{l}$.  The
  $\nu$-twisted $L_l$-modules are $A_{2l}^{(2)}$-modules of level
  $-l-\frac{1}{2}$ with an anti-homogeneous realization.  We classify
  simple $\nu$-twisted highest-weight (weak) $L_l$-modules using
  twisted Zhu algebras and singular vectors for
  $\widehat{\la{sl}}_{2l+1}$ at level $-l-\frac{1}{2}$ obtained by
  Per\v{s}e.

  We find that there are finitely many such modules up to isomorphism,
  and the $\nu$-twisted (weak) $L_l$-modules that are in category
  $\sO$ for $A_{2l}^{(2)}$ are semi-simple.
\end{abstract}
\maketitle

\section{Introduction}

In \cite{KacWak-modinv}, while studying the modular invariant
representations of affine Lie algebras, Kac and Wakimoto introduced
the notion of admissible highest-weight representations and classified these in 
\cite{KacWak-classification}.
Let $\la{g}$ be a finite dimensional simple Lie algebra of type $X_N$,
and let $\widehat{\la{g}}$ be the corresponding \emph{untwisted} affine Lie
algebra of type $X_N^{(1)}$ (see \cite[Table Aff 1]{Kac-book}).  Since
\cite{KacWak-modinv}, vertex operator algebras (say $L(\la{g},k)$)
based on the untwisted affine Lie algebra $\widehat{\la{g}}$ at
admissible levels $k$ have received a tremendous amount of attention.

In \cite{AdaMil-sl2}, Adamovi\'{c} and Milas analysed the case of
$\la{g}=\la{sl}_2$ for all admissible levels $k$, classified the weak
modules of $L(\la{sl}_2,k)$ that belong to category $\sO$ as
$\widehat{\la{sl}_2}$-modules and showed that this category is
semi-simple with finitely many equivalence classes of
irreducibles. They conjectured that this holds for all untwisted
affine Lie algebras.  The $\la{g}=\la{sl}_2$ case was also studied in
\cite{DongLiMas-sl2, FeiMal-sl2}.  In a celebrated achievement,
Arakawa proved this conjecture \cite{Ara-admaff}. Before
\cite{Ara-admaff}, several other specific cases of this conjecture
were known to be true, notably in type $C$ \cite{Ada-typeC}, in type
$A$ \cite{Per-typeA}, in type $B$ \cite{Per-typeB} and for $G_2$
\cite{AxtLee-G2}.  
In \cite{AdaMil-sl2, Ada-typeC,  Per-typeA, Per-typeB, AxtLee-G2},
the technique of Zhu algebras \cite{ZhuMod96, FreZhu} and an explicit 
knowledge of the singular vectors at the prescribed levels was used.

It is also important to consider categories larger than $\sO$, namely,
the categories generated by \emph{relaxed} highest weight modules.
For $\la{g}=\la{sl}_2$, simple relaxed highest-weight modules at
admissible levels were classified using the Zhu technology in
\cite{AdaMil-sl2}. Recently, a classification for arbitrary rank based
on Mathieu's coherent famililes \cite{Mat-class} is presented in
\cite{KawRid-II}. We will not pursue this direction here.

Kac-Wakimoto's work \cite{KacWak-modinv} also included a discussion of
affine Lie super-algebras, and indeed models related to
$\la{osp}(1|2)$ at admissible levels have been analysed in
\cite{SnaRidWood-osp, CreKanLiuRid-osp, Wood-osp}. However, it is not
clear if semi-simplicity holds beyond $\la{osp}(1|2n)$\footnote{I
  thank David Ridout for pointing this out.}, and \cite{Ara-admaff}
does not encompass affine super-algebras.

Despite all these stellar advances, the case of \emph{twisted} affine
Lie algebras (see \cite[Table Aff 2, Aff 3]{Kac-book}) has received
little to no attention. The most natural way to access modules for
$X_N^{(r)}$ where $r=2,3$ is by considering $\nu$-\emph{twisted}
modules for the VOAs based on the corresponding \emph{untwisted}
affine Lie algebras $X_N^{(1)}$ \cite{FreLepMeu, Li-twisted}.  Here,
$\nu$ is a lift of the non-trivial Dynkin diagram automorphism to
$\la{g}$ and $\nu$ fixes the chosen Cartan sub-algebra. $\nu$ is then
extended to act on the whole VOA.  We may modify $\nu$ by composing it
with $\exp(2\pi\im\cdot \ad h)$ for certain Cartan elements $h$ with
$\nu(h)=h$ \cite[Eq.\ 8.1.2]{Kac-book}.  This way, we get different
\emph{realizations} of $X_N^{(r)}$ differing primarily in their
gradings.

In this paper, we consider the case of $A_{2l}^{(2)}$ at level
$-l-\frac{1}{2}$ for $l\in\ZZ_{>0}$. We use the
\emph{anti-homogeneous} realization of $A_{2l}^{(2)}$ obtained from an
involutive lift $\nu$ of the Dynkin diagram automorphism of
$\la{g}=\la{sl}_{2l+1}=A_{2l}$. Here, anti-homogeneous refers to the
fact that our picture is exactly the opposite of the traditional one
-- our \emph{affine}, i.e., $0$th node for $A_{2l}^{(2)}$ is what is
usually the \emph{last}, i.e., $l$th node in the affine Dynkin
diagram, and our horizontal subalgebra is thus $\la{so}_{2l+1}=B_l$
and not $\la{sp}_{2l}=C_l$.

We use twisted Zhu algebras \cite{DongLiMas-twZhu} (see also
\cite{Yang-twaff}) and the singular vectors for
$\widehat{\la{sl}}_{2l+1}$ at level $-l-\frac{1}{2}$ obtained by
Per\v{s}e in \cite{Per-typeA}. Somewhat surprisingly, we find that the
top spaces of the $A_{2l}^{(2)}$ modules (which are naturally modules
for our horizontal subalgebra, $B_l$) are exactly the same as the top
spaces for the highest-weight $L(B_l,-l+\frac{3}{2})$-modules found in
\cite{Per-typeB}.  Letting $h^\vee$ denote the dual Coxeter number
\cite[Ch.\ 6]{Kac-book}, the relation between these levels for $l>1$
is that
\begin{align}
-l-\frac{1}{2} + h^\vee_{{A_{2l}}^{(2)}} = l+\frac{1}{2} = -l+\frac{3}{2} + h^\vee_{{B_{l}}^{(1)}}.
\end{align}
Our proof of admissibility of the $A_{2l}^{(2)}$ highest weights thus
obtained also uses a large portion of the corresponding proof in
\cite{Per-typeB}. The proof of semi-simplicity then proceeds as in
\cite{AdaMil-sl2,Ada-typeC, Per-typeA, Per-typeB, AxtLee-G2}, etc.,
with appropriate changes to accommodate twisted modules.

We find that there are \emph{two} inequivalent $\nu$-twisted
irreducible modules for $L(\ourla,\ourlev)$ with finite dimensional
top spaces (Remark \ref{rem:ordsimples}). Recall that $-l-\frac{1}{2}$
is a boundary admissible level \cite{KacWak-boundary} for
$A_{2l}^{(1)}$ and correspondingly, there is exactly one (up to
equivalence) irreducible with finite dimensional top space in the
untwisted sector \cite{Per-typeA}.

This naturally leads to the following speculations and considerations
that we are currently investigating.
\begin{enumerate}[topsep=0pt]
\item Perhaps the most important speculation we have is that the
  Adamovi\'{c}-Milas conjecture / Arakawa's theorem is true for
  \emph{twisted} affine Lie algebras as well. To be precise, we
  speculate that given a twisted affine Lie algebra $X_N^{(r)}$, and
  an admissible level $k$ for (the untwisted) $X_N^{(1)}$, there
  exists an appropriate realization of $X_N^{(r)}$ and a corresponding
  lift $\nu$ of the (non-trivial) diagram automorphism of $X_N$, such
  that $\nu$-twisted (weak) $L(X_N,k)$-modules which are in category
  $\sO$ as $X_N^{(r)}$-modules form a semi-simple category with
  finitely many irreducibles.
	
	
\item In \cite{CreHuaYang-admaffvtc}, it was proved that the ordinary
  modules for $L(X_N,k)$ ($k$ is admissible level for the untwisted
  affine Lie algebra $X_N^{(1)}$), form a vertex tensor category; the
  rigidity of this category for the simply-laced cases was proved in
  \cite{Cre-admADErigid}.  Our results imply that in general, the
  category consisting of untwisted and $g$-twisted ordinary modules
  for $g\in\langle\nu\rangle$ will not be closed under twisted fusion.

  In our present case, the untwisted and $\nu$-twisted ordinary
  modules form semi-simple categories, but the untwisted sector has
  one simple (up to equivalences) and the $\nu$-twisted one has two
  inequivalent simples. The aforementioned closure under twisted fusion 
  is now forbidden by elementary considerations of tensor categories.

  In general, such ordinary $g$-twisted modules are integrable in the
  direction of $\la{g}^0$ (the fixed point subalgebra of $\la{g}=X_N$
  under $\nu$), thus it is natural to expect their (twisted) fusion to
  be integrable with respect to $\la{g}^0$, but it need not be
  $\la{g}$-integrable.
	
  It will be difficult but interesting to work out the
  \emph{twisted} fusion for our $\nu$-twisted modules, and perhaps
  also the fusion for the \emph{untwisted} modules for the
  corresponding orbifold. Here, the structure of this orbifold
  \cite{Ali-afforb} will be important to first classify its modules.
	
\item It will be very interesting to also analyse \emph{twisted}
  quantum Drinfeld-Sokolov reductions \cite{KacWak-twQDS} of the
  $\nu$-twisted modules we have found for appropriate nilpotents
  $f\in\la{sl}_{2l+1}$ fixed by $\nu$, and compare these to twisted
  representations of the corresponding $\sW$-algebras. Here again, one
  may take a slightly different route and investigate the relation of
  the structure and representation theory of the affine orbifold with
  that of the $\sW$-algebra orbifold.
\end{enumerate}

\section{Twisted affine Lie algebra $A_{2l}^{(2)}$}

\subsection{Twisted affine Lie algebra $A_{2l}^{(2)}$, basics}
We will consider what we call the \emph{anti-homogeneous} realization
of $A_{2l}^{(2)}$ and recall basic facts from \cite{Kac-book,
  Carter-book}.  Consider the generalized Cartan matrices:
\begin{align}
\tilde{A}=\bordermatrix{ & 0 & 1 \cr 0 & 2 & -1 \cr  1 & -4 & 2}, 
\,(l=1),
\end{align}
\begin{align}
\tilde{A}=
\bordermatrix{ 
	& 0  & 1 &  2 & \cdots & l-2 & l-1 & l \cr
	0       & 2  & -1 &     &  &   &  &  \cr	
	1       & -2 &  2 & -1  &  &    &  &  \cr	
	2       &    & -1 & 2   & \ddots &    &  & \cr	
	\vdots  &    &    &  \ddots   & \ddots & \ddots   &  &  \cr	
	l-2     &    &    &     & \ddots & 2   & -1 &  \cr	
	l-1     &    &    &     &  & -1   & 2 & -1 \cr	
	l       &    &    &     &  &    & -2 & 2 \cr	
},\,(l>1).
\end{align}
We have the corresponding (affine) Dynkin diagrams:
\begin{align}
\begin{matrix}
\begin{tikzpicture}
\node[fill, draw, circle,label=below:\tzl{\alpha_0},label=above:\tzl{1},inner sep=1.25pt]at(0,0) (a0) {};
\node[draw, circle,label=below:\tzl{\alpha_{1}},label=above:\tzl{2},inner sep=1pt]at(1,0) (a1) {};
\draw (0.15,0.09)--(0.76,0.09);
\draw (0.15,0.03)--(0.8,0.03);
\draw (0.15,-0.03)--(0.8,-0.03);
\draw (0.15,-0.09)--(0.76,-0.09);
\draw (0.75,0.15) to[bend right] (0.85,0) to[bend right] (0.75,-0.15);
\end{tikzpicture}\\
\scriptstyle{l=1}
\end{matrix}
\quad\quad\quad\quad\qquad
\begin{matrix}
\begin{tikzpicture}
\node[fill, draw, circle,label=below:\tzl{\alpha_0},label=above:\tzl{1},inner sep=1.25pt]at(0,0) (a0) {};
\node[draw, circle,label=below:\tzl{\alpha_1},label=above:\tzl{2},inner sep=1pt]at(1,0) (a1) {};
\node[draw, circle,label=below:\tzl{\alpha_2},label=above:\tzl{2},inner sep=1pt]at(2,0) (a2) {};
\node[draw, circle,label=below:\tzl{\alpha_3},label=above:\tzl{2},inner sep=1pt]at(3,0) (a3) {};
\node[] at(4,0) (a4) {};
\node[] at(5,0) (a5) {};
\node[draw, circle,label=below:\tzl{\alpha_{l-2}},label=above:\tzl{2},inner sep=1pt]at(6,0) (alm2) {};
\node[draw, circle,label=below:\tzl{\alpha_{l-1}},label=above:\tzl{2},inner sep=1pt]at(7,0) (alm1) {};
\node[draw, circle,label=below:\tzl{\alpha_{l}},label=above:\tzl{2},inner sep=1pt]at(8,0) (al) {};
\draw[-implies,double equal sign distance] (0.15,0)--(0.85,0);
\draw (1.15,0) -- (1.85,0);
\draw (2.15,0) -- (2.85,0);
\draw (3.15,0) -- (3.85,0);
\draw[dotted] (a4)--(a5);
\draw (5.15,0) -- (5.85,0);
\draw (6.15,0) -- (6.85,0);
\draw[-implies,double equal sign distance] (7.15,0)--(7.85,0);
\end{tikzpicture} \label{eqn:ahomdynkin} \\
\scriptstyle{l>1}
\end{matrix}
\end{align}
Here, the $0^{th}$ node is considered to be the \emph{affine} node and
the horizontal subalgebra of $A_{2l}^{(2)}$ is of type
$B_{l}=\la{so}_{2l+1}$ (unlike the usual convention where it turns out
to be $C_l=\la{sp}_{2l}$):
\begin{align}
\begin{matrix}
\begin{tikzpicture}
\node[draw, circle,label=below:\tzl{\alpha_{1}},inner sep=1pt]at(1,0) (a1) {};
\end{tikzpicture}\\
\scriptstyle{l=1}
\end{matrix}
\quad\quad\quad\quad\qquad
\begin{matrix}
\begin{tikzpicture}
\node[draw, circle,label=below:\tzl{\alpha_1},inner sep=1pt]at(1,0) (a1) {};
\node[draw, circle,label=below:\tzl{\alpha_2},inner sep=1pt]at(2,0) (a2) {};
\node[draw, circle,label=below:\tzl{\alpha_3},inner sep=1pt]at(3,0) (a3) {};
\node[] at(4,0) (a4) {};
\node[] at(5,0) (a5) {};
\node[draw, circle,label=below:\tzl{\alpha_{l-2}},inner sep=1pt]at(6,0) (alm2) {};
\node[draw, circle,label=below:\tzl{\alpha_{l-1}},inner sep=1pt]at(7,0) (alm1) {};
\node[draw, circle,label=below:\tzl{\alpha_{l}},inner sep=1pt]at(8,0) (al) {};
\draw (1.15,0) -- (1.85,0);
\draw (2.15,0) -- (2.85,0);
\draw (3.15,0) -- (3.85,0);
\draw[dotted] (a4)--(a5);
\draw (5.15,0) -- (5.85,0);
\draw (6.15,0) -- (6.85,0);
\draw[-implies,double equal sign distance] (7.15,0)--(7.85,0);
\end{tikzpicture} \label{eqn:ahomB}\\
\scriptstyle{l>1}
\end{matrix}
\end{align}
We have:
\begin{align}
a=(a_0,\dots,a_l)^t=(1,2,\dots,2)^t, &\quad \tilde{A}a=0\label{eqn:deltacoeffs}\\
a^\vee=(a_0^\vee,\dots,a_l^\vee)=(2,2,\dots,2,1), &\quad a^\vee \tilde{A}=0. \label{eqn:ccoeffs}
\end{align}

We will often use the following indexing sets:
\begin{align}
I=\{1,\dots,l\}, \hI=\{0,\dots,l\}.
\end{align}

The twisted affine Lie algebra $A_{2l}^{(2)}$ has Kac-Moody generators
$h_i,e_i,f_i$ ($i\in\hI$) and $d$ satisfying the usual relations
\cite{Kac-book}.  We let the Cartan subalgebra $\la{H}$ be spanned by
$h_0,h_1,\dots,h_l,d$, The simple roots $\alpha_i$ ($i\in\hI$) are
elements of $\la{H}^*$.  We will sometimes denote the pairing between
$\la{H}^*$ and $\la{H}$ by $(\cdot,\cdot)$. This notation will be
overloaded below.  For $i,j\in\hI$, $k\in I$ we have:
\begin{align}
\alpha_i(h_j)=(\alpha_i,h_j)=\widetilde{A}_{ji}, \quad
\alpha_0(d)=(\alpha_0,d)=1,\quad \alpha_k(d)=(\alpha_k,d)=0.
\end{align}
The canonical central element $c\in\la{H}$ of $A_{2l}^{(2)}$ and the basic imaginary root $\delta$ are expressed as:
\begin{align}
c = \sum_{0\leq i\leq l}a_i^\vee h_i= 2h_0+\dots +2h_{l-1}+h_l,\quad
\delta = \sum_{0\leq i\leq l}a_i \alpha_i = \alpha_0+2\alpha_1\dots +2\alpha_l.
 \label{eqn:cdelta}
\end{align}
We choose $h_1,\dots,h_l,c,d$ as a standard basis for $\la{H}$.
We have:
\begin{align}
\delta(c)=0,\,\, \delta(h_1)=0,\,\,\dots,\,\,\delta(h_l)=0, \,\,\delta(d)=1.\label{eqref:deltaaction}
\end{align}
We also consider $\Lambda_0^c\in\la{H}^*$ such that:
\begin{align}
\Lambda_0^c(c)=1,\,\, \Lambda_0^c(h_1)=0,\,\,\dots,\,\,\Lambda_0^c(h_l)=0, \,\,\Lambda_0^c(d)=0.\label{eqref:l0action}
\end{align}
Note that $\Lambda_0^c$ is $\frac{1}{2}\Lambda_0$, where $\Lambda_0$
is the fundamental weight corresponding to the $0$th node.  It is easy
to see that $\alpha_1,\dots,\alpha_l,\delta,\Lambda_0^c$ form a basis
of $\la{H}^*$.  The standard symmetric (non-degenerate) bilinear form
on $\la{H}$ is given by:
\begin{align}
(h_i,h) = (\alpha_i,h)\cdot \frac{a_i}{a_i^\vee},\quad i\in\hI,\,h\in\la{H},\quad\quad (d,d)=0.
\end{align}
The non-degenerate map $(\cdot,\cdot)$ leads to a (linear) isomorphism
$\iota:\la{H}\rightarrow \la{H}^*$ with ($i\in\hI$):
\begin{align}
(\iota(h),h_1) = (h,h_1),\,\, \mathrm{for\,all}\,\,h,h_1\in\la{H},\\
\iota(h_i)=\frac{a_i}{a_i^\vee}\cdot\alpha_i,\,\,\iota(c)=\delta,\,\,\iota(d)=\Lambda_0^c.
\end{align}
We may thus get a (non-degenerate) symmetric bilinear form on
$\la{H}^*$ by transport of structure.  It satisfies ($i,j\in\hI$ and
$k\in I$):
\begin{align}
(\alpha_i,\alpha_j)=\widetilde{A}_{ij}\cdot\frac{a_i^\vee}{a_i},\quad
(\delta,\alpha_k)= (\delta,\delta)= (\Lambda_0^c,\alpha_k)=(\Lambda_0^c,\Lambda_0^c)=0,\quad
(\delta,\Lambda_0^c)=1.\label{eqn:innerprodH*}
\end{align}
The squared root lengths are therefore ($k=1,\dots,l-1$):
\begin{align}
(\alpha_0,\alpha_0)=4,\,\,(\alpha_k,\alpha_k)=2,\,\,(\alpha_l,\alpha_l)=1.
\end{align}

The root system of $A_{2l}^{(2)}$ depends on $l$. Let $l>1$.  The root
system of the horizontal subalgebra $B_l$ can be realized as (with
$k=1,\dots,l-1$ and $i,j\in I$):
\begin{align}
  \alpha_k=\epsilon_k-\epsilon_{k+1},\,\,\alpha_l=\epsilon_l,\quad\mathrm{where}\,\, (\epsilon_i,\epsilon_j)=\delta_{ij}.
\end{align}
We have:
\begin{align}
\Phi^\rl = \{ \pm \epsilon_i \pm \epsilon_j\, |\, 1\leq i<j\leq l\},
\quad  
\Phi^\rs = \{ \pm \epsilon_i\, |\, i=1,\dots,l\},
\label{eqn:horizroots}
\end{align}
and the real roots for $A_{2l}^{(2)}$ are \cite{Carter-book}:
\begin{align}
\widehat{\Delta}^{\re} 
&= \widehat{\Phi}^{\rl}\cup\widehat{\Phi}^{\ri}\cup\widehat{\Phi}^{\rs}\nonumber\\
&=\{ 2\alpha_s + (2m+1)\delta\, |\, \alpha\in\Phi_s, m\in \ZZ\}\cup
\{ \alpha + m\delta\, |\, \alpha\in\Phi_l, m\in \ZZ\}\cup
\{ \alpha + m\delta\, |\, \alpha\in\Phi_s, m\in \ZZ\}
\label{eqn:rerootsl>1}
\end{align}
where the squared norms of roots in the respective sets are $4,2$ and
$1$.  With $k=1,\dots,l-1$, the fundamental weights of the horizontal
subalgebra are:
\begin{align}
\omega_k=\epsilon_1 + \cdots + \epsilon_k,\quad  \omega_l=\frac{1}{2}(\omega_1+\cdots+\omega_l).
\label{eqn:horizweights}
\end{align}
For $l=1$, the horizontal subalgebra is $\la{sl}_2$ with simple
positive root $\alpha_1$, and we have:
\begin{align}
\widehat{\Delta}^\re&= \widehat{\Phi}^{\rl}\cup \widehat{\Phi}^{\rs}=
\{\pm 2\alpha_1+(2m+1)\delta\,|\,m\in\ZZ\}\cup\{\pm\alpha_1+m\delta\,|\,m\in\ZZ\}\label{eqn:rerootsl=1}.
\end{align}
Here, note that $(\alpha_1,\alpha_1)=1$, and thus squared norms of the
roots in these sets are $4$ and $1$, respectively.  The fundamental
weight for the horizontal algebra is $\omega_1 = \frac{1}{2}\alpha_1$.

Let $\rho$ be any element of $\la{H}^*$ satisfying $\rho(h_i)=1$ for
all $i\in\hI$. We may take it to be:
$\rho = h^\vee \Lambda_0^c + \overline{\rho}$ where $\overline{\rho}$
is half the sum of positive roots of the horizontal sub-algebra and
$h^\vee=a_0^\vee+\cdots+a_l^\vee = 2l+1$ is the dual Coxeter number of
$A_{2l}^{(2)}$.  For $l=1$, $\overline{\rho}=\frac{1}{2}\alpha_1$.  If
$l>1$, we have:
\begin{align}
  \overline{\rho}= \left(l-\frac{1}{2}\right)\epsilon_1 + \left(l-\frac{3}{2}\right)\epsilon_2 + \cdots + \frac{1}{2}\epsilon_l
\label{eqn:rhol>1}.
\end{align}

Finally, recall the notion of Weyl group $W$ generated by reflections
$r_i$ ($i\in\hI$) satisfying $r_i(h)=h-(\alpha_i,h)h_i$ for
$h\in\la{H}$ and we transfer the action to $\la{H}^*$ by $\iota$.  We
have $W\cdot\{\alpha_0,\dots,\alpha_l\}=\widehat{\Delta}^\re$ and we
define $\widehat{\Delta}^{\vee,\re}=W\cdot\{h_0,\dots,h_l\}$, which is
the set of real coroots.  There is thus a bijection from real roots to
real coroots denoted by $^\vee$ such that
$\alpha_i\mapsto \alpha_i^\vee = h_i$, and it is not hard to prove,
using the invariance of $(\cdot,\cdot)$ under the Weyl group that for
$\lambda\in\la{H}^*,\alpha\in\widehat{\Delta}^\re$ that
\begin{align}
(\lambda,\alpha^\vee)=\frac{2}{(\alpha,\alpha)}(\lambda,\alpha).\label{eqn:rootcorooteqn}
\end{align}
Given $\lambda\in\la{H}^*$, define 
\begin{align}
\widehat{\Delta}_{\lambda}^{\vee,\re}=\{ \alpha^\vee \in \widehat{\Delta}^{\vee,\re}\,|\,(\lambda,\alpha^\vee)\in\ZZ\},
\quad \widehat{\Delta}_\lambda^{\re}=\{\alpha \in \widehat{\Delta}^{\re}\,|\,\alpha^\vee\in \widehat{\Delta}_{\lambda}^{\vee,\re}\}
\label{eqn:lambdaintegralcoroots}.
\end{align}
\begin{defi}\label{def:admissible}\cite{KacWak-modinv}	
  We say an element $\lambda\in\la{H}^*$ is an admissible weight if:
\begin{enumerate}
\item $(\lambda+\rho,\alpha^\vee)\not\in\{0,-1,-2,\cdots\}$ for all
  $\alpha^\vee\in \widehat{\Delta}^{\vee,\re}_+$ and
\item
  $\QQ\widehat{\Delta}_{\lambda}^{\vee,\re} = \QQ\{ h_0,\dots,h_l\}$.
\end{enumerate}
\end{defi}
\begin{rem}\label{rem:adm2}
  The second condition can be equivalently replaced with
  $\QQ\widehat{\Delta}_{\lambda}^{\re} = \QQ\{
  \alpha_0,\dots,\alpha_l\}$. 
\end{rem}

\subsection{Twisted affinizations of Lie algebras}

Suppose we are given a finite dimensional (simple) Lie algebra
$\la{g}$ with a symmetric invariant bilinear form
$\langle \cdot,\cdot\rangle$.  Let $\nu$ be an automorphism of
$(\la{g},\langle\cdot,\cdot\rangle)$ of a finite order, say $T$.
Corresponding to $\nu$, we have the eigen-decomposition
\begin{align}
\la{g}=\bigoplus_{j\in \ZZ/T\ZZ}\la{g}^j,\quad x\in\la{g}^j\Leftrightarrow\nu(x)=e^{2\pi\im j/T }x.
\end{align}
Consider the affinization:
\begin{align}
\widehat{\la{g}}^{\frac{1}{T}\ZZ} = \la{g}\otimes \CC[t^{1/T},t^{-1/T}] \oplus \CC c.
\end{align}
We will often drop the superscript $^{\frac{1}{T}\ZZ}$ since it will
be clear from the context.  The element $c$ is central and the other
brackets are ($a,b\in\la{g}$, $m,n\in\frac{1}{T}\ZZ$):
\begin{align}
[a\otimes t^m,b\otimes t^n]=[a,b]\otimes t^{m+n} + m\delta_{m+n,0}\langle a,b\rangle c.
\end{align}
Define $\nu(t^{j/T}) = e^{-2\pi\im j/T}t^{j/T}$ and extend linearly to
$\CC[t^{1/T},t^{-1/T}]$. Also let $\nu(c)=c$.  We are interested in
the fixed point sub-algebra
\begin{align}
\widehat{\la{g}}[\nu]= \bigoplus_{j\in \ZZ/T\ZZ} \left(\la{g}^j\otimes t^{j/T}\CC[t,t^{-1}]\right) \oplus \CC c.
\end{align}
We shall obtain $A_{2l}^{(2)}$ via such twisted affinization
 of $\la{g}=A_{2l}=\la{sl}_{2l+1}$.

\subsection{Anti-homogeneous realization of $A_{2l}^{(2)}$}\label{sec:ahom}
We start by fixing some notation.  Fix $l\in\ZZ_{>0}$.  Consider
$\la{gl}_{2l+1}$ spanned by elementary matrices $E_{i,j}$ (or simply
$E_{ij}$) with $1$ in row $i$ and column $j$, zeros everywhere else.
Let $E_i=E_{i,i+1}$, $F_{i}=E_{i+1,i}$, $H_i=E_{i,i}-E_{i+1,i+1}$ be
the standard choices of simple root vectors and simple coroots for
$\la{g}=\la{sl}_{2l+1}\subset \la{gl}_{2l+1}$.  Let
$\la{g}=\la{n}_-\oplus\la{h}\oplus \la{n}_+$ be the triangular
decomposition of $\la{g}$.  Let
$E_{\theta}=E_{1,2l+1}=[\cdots[[E_1,E_2],E_3],\cdots,E_{2l}]$.

The anti-homogeneous realization is achieved via an involutive lift of
the diagram automorphism of $\la{sl}_{2l+1}$ which we now describe.

Define $\nu(E_{i,j})=-(-1)^{i-j}E_{2l+2-j,2l+2-i}$.  It is
straightforward to prove that $\nu$ is an involution of
$\la{gl}_{2l+1}$ and also of the Lie subalgebra
$\la{g}=\la{sl}_{2l+1}$.  Corresponding to $\nu$ we have the
decomposition $\la{g}=\la{g}^0\oplus\la{g}^1$ (where the superscripts
are understood as elements of $\ZZ_2$).  It is clear that
$\nu(H_i)=H_{2l+1-i}$, for $i=1,\dots, 2l$, and thus $\nu$ is an
involutive lift of the Dynkin diagram automorphism of $\la{g}$.
Observe that $E_{\theta}\in\la{g}^1$.

The fixed points $\la{g}^0$ form a simple Lie algebra of type
$B_l=\la{so}_{2l+1}$ with the following Chevalley generators
\cite{Carter-book}.
\begin{align}
e_i = E_i + E_{2l+1-i},\,\,\, f_i =F_i + F_{2l+1-i},\,\,\, h_i = H_i + H_{2l+1-i},&\quad (i=1,\dots l-1),\nonumber\\
\overline{e_l} = \sqrt{2}(E_l+E_{l+1}),\,\,\, 
\overline{f_l} = \sqrt{2}(F_l+F_{l+1}),\,\,\, 
\overline{h_l} = 2(H_l+H_{l+1}).&\label{eqn:Blgens}
\end{align}
For convenience, we denote:
\begin{align}
{e_l} = E_l+E_{l+1},\,\,\, 
{f_l} = F_l+F_{l+1},\,\,\, 
{h_l} = H_l+H_{l+1}.&\label{eqn:elhlfl}
\end{align}
Note again that the actual generators for $B_l$ (which will also get
promoted to a subset of generators for $A_{2l}^{(2)}$ below) are
indeed $e_1,\dots,e_{l-1},\overline{e_l}$,
$h_1,\dots,h_{l-1},\overline{h_l}$,
$f_1,\dots,f_{l-1},\overline{f_l}$.  We have introduced $e_l,f_l,h_l$
only to save ourselves from keeping track of the various scalars.

Given any $a\in\la{g}$, we let 
\begin{align}
a=a^++a^-,\quad a^+=\frac{1}{2}(a+\nu a)\in\la{g}^0,\,\, a^-=\frac{1}{2}(a-\nu a)\in\la{g}^1\label{eqn:pmnotation}.
\end{align}

We let $\la{g}^0=\la{n}_-^0\oplus\la{h}^0\oplus \la{n}_+^0$ be the
triangular decompositions with respect to our choices of root vectors.
Note that $\la{n}_+^0$ is spanned by $E_{i,j}^+$ for
$1\leq i<j\leq 2l+1$ and that $\dim(\la{h}^0)=l$.

Later, we shall require the dimension of weight $0$ space of
$\la{g}^1$ as a $\la{g}^0$-module. One may calculate this directly by
decomposing $\la{g}$ with respect to $\la{g}^0$. Here we present one
more approach.  Temporarily, let $L(\omega)$ denote irreducible
$\la{g}^0\cong \la{so}_{2l+1}$ module with highest weight $\omega$.
As a $\la{g}^0$-module, $\la{g}^1\cong L(2\omega_1)$ and is generated
by the highest weight vector $E_{\theta}$. Further,
$\mathrm{Sym}^2L(\omega_1) \cong L(2\omega_1) \oplus \CC$ and
$L(\omega_1)$ is the defining representation of dimension $2l+1$.  It
can be seen that if $\omega$ is a weight of $L(\omega_1)$ then so is
$-\omega$, $0$ is a weight, and every weight space is one
dimensional. Thus, the $0$ weight space of $\mathrm{Sym}^2L(\omega_1)$
has dimension $l+1$, and the $0$ weight space of
$\la{g}^1\cong L(2\omega_1)$ has dimension $l$.

Now, $\widehat{\la{sl}}_{2l+1}[\nu]$ gives us an anti-homogeneous
realization of $A_{2l}^{(2)}$. Considering the numbering from
\eqref{eqn:ahomdynkin}, we let the Kac-Moody generators to be the ones
given in \eqref{eqn:Blgens} for $i=1,\dots ,l$.  As for $h_0,e_0,f_0$,
we take them to be:
\begin{align}
h_0 &= -H_{\theta} + \frac{1}{2}c = -(H_1+\cdots +H_{2l}) + \frac{1}{2}c,\,\,
e_0 = E_{2l+1,1}\otimes t^{1/2},\,\,
f_0 = E_{1,2l+1}\otimes t^{-1/2}.
\end{align}

The involution $\nu$ extends to the universal enveloping algebra
$\cU(\la{g})$ and we have
$\cU(\la{g}^0)\subsetneq\cU(\la{g})^0\subsetneq \cU(\la{g})$.  Later,
we will be interested in certain two-sided ideals
$I\subset\cU(\la{g}^0)$.

\begin{rem}
  It is possible to achieve this realization of $A_{2l}^{(2)}$ by
  using the Chevalley involution \cite[Eq.\ 1.3.4]{Kac-book} of
  $A_{2l}$. However, it is convenient to use an automorphism that
  respects the triangular decomposition of $\la{g}$.
\end{rem}

\section{Twisted Zhu algebra: Preliminaries}

Let $(V,Y,\vac,\omega)$ be a vertex operator algebra \cite{LepLi} and
let $g$ be an automorphism of finite order $T$ of $V$.  Let $V^{j}$
($j=0,\dots, T-1$) be the subspace of eigenvalue $e^{2\pi \im j/T}$
for $g$.  Following \cite{DongLiMas-twZhu}, we now define the twisted
Zhu algebra $A_g(V)$ as follows.  Let $u\in V^{j}$ ($0\leq j< T$) be
$L(0)$- and $g$- homogeneous element and let $v\in V$.  Define
\begin{align}
u\circ_gv&=\res_x\left( \frac{(1+x)^{\mathrm{wt}u-1+\delta_j+\frac{j}{T}}}{x^{1+\delta_j}}Y(u,x)v\right),\label{eqn:circg}\\
u\ast_g v&=
\begin{cases}
\res_x\left(\dfrac{(1+x)^{\mathrm{wt}u}}{x}Y(u,x)v\right)&\quad\,\mathrm{if}\,\,j= 0\\
0&\quad\,\mathrm{if}\,\,\mathrm{otherwise}.\label{eqn:zhug}\\
\end{cases}
\end{align}
where we take $\delta_j=1$ when $j=0$ and $\delta_j=0$ if $j\neq 0$.
Extend $\circ_g,\ast_g$ to $V$ linearly.  Further define
\begin{align}
O_g(V)=\mathrm{Span}\{u\circ_g v\,\vert\,u,v\in V\},\quad 
A_g(V)=V/O_g(V).
\end{align}
Taking $v=\vac$ in \eqref{eqn:circg} immediately gives us:
\begin{align}
V^i \subset O_g(V)\,\,\mathrm{if}\,\,i\not\equiv 0\pmod{T}\label{eqn:oddzerozhu}.
\end{align}

We will denote the image in $A_g(V)$ of $v\in V$ by $\lz v\rz_g$.  It
was shown in \cite{DongLiMas-twZhu} that $O_g(V)$ is a two-sided ideal
with respect to $\ast_g$ and that $A_g(V)$ is an associative algebra
under product $\ast_g$ with $\lz\vac\rz$ as the unit and
$\lz\omega\rz$ belonging to the center.  When $g=1$,
$\circ_g,\ast_g,O_g(V),A_g(V)$ are simply denoted as
$\circ,\ast,O(V),A(V)$, respectively.  We recall the following basic
theorems (and their twisted analogues) from
\cite{ZhuMod96,FreZhu,DongLiMas-twZhu,Yang-twaff}.
\begin{thm}
  We have:
  \begin{enumerate}
  \item\cite{FreZhu,Yang-twaff} Let $I$ be a $g$-stable ideal of $V$,
    and suppose $\vac\not\in I$, $\omega\not\in I$. Then, the image of
    $I$ in $A_g(V)$, denoted as $A_g(I)$ is a two-sided
    ideal. Moreover, $A_g(V/I)\cong A_g(V)/A_g(I)$.
  \item\cite[Thm.\ 7.2]{DongLiMas-twZhu} There is a bijective
    correspondance between the set of equivalence classes of simple
    $A_g(V)$ modules and {weak, $\frac{1}{T}\ZZ$-gradable} $g$-twisted
    $V$-modules (see \cite[Def.\ 3.3]{DongLiMas-twZhu}, where these
    modules are called admissible, not to be confused with
    \cite{KacWak-modinv}).
  \end{enumerate}
\end{thm}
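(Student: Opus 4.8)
The plan is to treat the two parts separately, since part (1) is a formal consequence of the definitions whereas part (2) is the substantive structural theorem of \cite{DongLiMas-twZhu}. For part (1) I would work directly with the vertex-algebra quotient map $\pi\colon V\to V/I$. Because $I$ is $g$-stable, $g$ descends to $V/I$ and $\pi$ intertwines the two $g$-actions; the hypotheses $\vac\notin I$ and $\omega\notin I$ ensure that $V/I$ is again a VOA with vacuum $\pi(\vac)$ and conformal vector $\pi(\omega)$, so that $A_g(V/I)$ is defined. Since $\pi$ is a homomorphism of vertex algebras, $\pi(u\circ_g v)=\pi(u)\circ_g\pi(v)$ and $\pi(u\ast_g v)=\pi(u)\ast_g\pi(v)$, hence $\pi(O_g(V))\subseteq O_g(V/I)$ and $\pi$ induces a surjective algebra homomorphism $\pi_\ast\colon A_g(V)\to A_g(V/I)$. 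The only real point is to identify $\ker\pi_\ast$ with $A_g(I)$. One inclusion is clear, since $v\in O_g(V)+I$ forces $\pi(v)\in O_g(V/I)$. For the converse, if $\pi(v)\in O_g(V/I)$ I would write $\pi(v)=\sum_k\pi(u_k)\circ_g\pi(w_k)=\pi\big(\sum_k u_k\circ_g w_k\big)$, so $v-\sum_k u_k\circ_g w_k\in I$ and therefore $v\in O_g(V)+I$. Thus $\ker\pi_\ast=(O_g(V)+I)/O_g(V)=A_g(I)$, which exhibits $A_g(I)$ as the kernel of an algebra map (hence a two-sided ideal) and yields $A_g(V)/A_g(I)\cong A_g(V/I)$. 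That $A_g(I)$ is two-sided can also be seen directly, since $I$ being a vertex-algebra ideal forces both $u\ast_g v$ and $v\ast_g u$ into $I$ whenever $u\in I$.

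For part (2) the strategy is to construct mutually inverse assignments on isomorphism classes, following \cite{DongLiMas-twZhu} (with \cite{ZhuMod96,FreZhu} as the untwisted template). In the forward direction, let $M=\bigoplus_{n\in\frac{1}{T}\ZZ_{\geq0}}M(n)$ be a simple weak $\frac{1}{T}\ZZ$-gradable $g$-twisted $V$-module with $M(0)\neq0$. For an $L(0)$- and $g$-homogeneous $u\in V^0$ I set $o(u)=u_{(\mathrm{wt}\,u-1)}$, the unique grading-preserving mode, and $o(u)=0$ for $u\in V^j$ with $j\neq0$ (there is no grading-preserving mode in that case, consistent with $V^j\subseteq O_g(V)$). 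The definitions of $\circ_g$ and $\ast_g$ are engineered precisely so that $o(u\circ_g v)$ annihilates $M(0)$ while $o(u\ast_g v)=o(u)o(v)$ there; verifying these two identities via the twisted commutator and associativity formulas is the main computation. It follows that $M(0)$ is an $A_g(V)$-module under $\lz u\rz_g\cdot w=o(u)w$, and simplicity of $M$ forces $M(0)$ to be a simple $A_g(V)$-module, because any proper $A_g(V)$-submodule $W\subsetneq M(0)$ generates a proper graded $V$-submodule of $M$ whose degree-zero piece is still $W$.

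Conversely, given a simple $A_g(V)$-module $U$, I would build a $g$-twisted analogue of the generalized Verma module $\bar M(U)$: a universal $\frac{1}{T}\ZZ_{\geq0}$-graded weak $g$-twisted $V$-module generated by $U$ in degree $0$, on which the grading-preserving modes act through the given $A_g(V)$-action, subject to the twisted mode relations. One shows $\bar M(U)$ has a unique maximal graded submodule meeting $U$ trivially, and the resulting irreducible quotient $L(U)$ is the desired twisted module with $L(U)(0)\cong U$. The assignments $M\mapsto M(0)$ and $U\mapsto L(U)$ are then checked to be mutually inverse. The hard part will be this reverse direction: one must prove that $\bar M(U)$ is nonzero with top space exactly $U$ (rather than a proper quotient of $U$), and that the zero-mode action recovered on $L(U)(0)$ reproduces the original $A_g(V)$-module structure on $U$. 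This bookkeeping of the twisted modes against the $\frac{1}{T}\ZZ$-grading is the technical core of \cite[Thm.\ 7.2]{DongLiMas-twZhu}.
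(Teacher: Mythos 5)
Your proposal is correct and takes essentially the approach the paper intends: the paper gives no proof of this theorem at all, quoting part (1) from \cite{FreZhu,Yang-twaff} (with a remark that the untwisted proof extends) and part (2) from \cite[Thm.\ 7.2]{DongLiMas-twZhu}, and your quotient-map argument identifying $\ker\pi_\ast$ with $A_g(I)$, together with the mutually inverse assignments $M\mapsto M(0)$ (via $o(u)=u_{\mathrm{wt}\,u-1}$ on $V^0$, zero on $V^j$, $j\neq 0$) and $U\mapsto L(U)$ via the twisted generalized Verma module, is precisely the content of those cited proofs. Deferring the two zero-mode identities $o(u\circ_g v)M(0)=0$ and $o(u\ast_g v)=o(u)o(v)$ and the nondegeneracy of $\bar M(U)$ to \cite{DongLiMas-twZhu} is consistent with the paper, which defers the entire theorem to the literature.
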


\begin{rem}
	The first part of the theorem above is proved for $g=1$ (the 
	untwisted case) in \cite{FreZhu}. It is not hard to extend 
	the proof to general $g$ \cite{Yang-twaff}.
\end{rem}

Now, for the rest of the section, let $V=V(\la{g},k)$ be the
(universal) Verma module vertex operator algebra based on
$(\la{g},\langle\cdot,\cdot\rangle)$ with level $k\neq -h^{\vee}$
\cite{LepLi}.  Let $g$ be an automorphism of $V$ order $T\neq 1$
lifted from an automorphism $g$ of
$(\la{g},\langle\cdot,\cdot\rangle)$ of the same order $T$.
\begin{thm}
We have:
\begin{enumerate}
\item\cite{Yang-twaff} There exists an (explicit) isomorphism of
  associative algebras
  $F:A_g(V(\la{g},k))\xrightarrow{\cong}\la{U}(\la{g}^0)$.
\item\cite{FreZhu} Let $x\in\la{g}^0$ and $v\in V$. Then,
  \begin{align}
    F(\lz x(0) v\rz_g) = [ x, \lz v\rz_g], \label{eqn:0modebrac}
  \end{align}
  where both sides are zero if
  $v\in V^{1}\oplus \cdots \oplus V^{T-1}$.
\item\cite{FreZhu} Let $x_1,x_2,\dots,x_m\in\la{g}^0$,
  $n_1,n_2,\dots,n_m\in\ZZ_{\geq 0}$. Then under the isomorphism
  above,
  \begin{align}
    F(\lz x_1(-n_1-1)x_2(-n_2-1)\cdots x_m(-n_m-1)\vac\rz_g) = (-1)^{n_1+n_2+\dots+n_m}x_mx_{m-1}\cdots x_1.\label{eqn:zhuanti2}
  \end{align}
\item The previous part immediately implies that for $x\in \la{g}^0$,
  $n\in\ZZ_{\geq 0}$ and $v\in V$, we have:
  \begin{align}
    F( \lz x(-n-1) v\rz_g) = (-1)^nF(\lz v\rz_g)x.\label{eqn:zhuanti1}
  \end{align}
\end{enumerate}
Henceforth, we will suppress $F(\cdots)$ and simply identify
$A_g(V(\la{g},k))$ with $\la{U}(\la{g}^0)$.
\end{thm}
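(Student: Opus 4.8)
The plan is to follow the Frenkel--Zhu computation of $A(V(\la{g},k))\cong\cU(\la{g})$ together with its twisted refinement due to Yang. Taking $v=\vac$ in \eqref{eqn:circg} gives \eqref{eqn:oddzerozhu}, so $V^i\subset O_g(V)$ whenever $i\not\equiv 0\pmod T$; hence $A_g(V)$ is a quotient of $V^0$ and $\lz v\rz_g=0$ for every $v\in V^1\oplus\cdots\oplus V^{T-1}$. Since $x\in\la{g}^0$ makes $x(0)$ preserve the $g$-grading, $x(0)v\in V^j$ whenever $v\in V^j$, so both sides of \eqref{eqn:0modebrac} vanish there; this disposes of the degenerate assertions in part (2). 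The remaining task is to analyse $V^0$, and I would: (i) use the relations defining $O_g(V)$ to reduce every class to a combination of monomials in $(-1)$-modes from $\la{g}^0$, obtaining a surjective linear map $\cU(\la{g}^0)\to A_g(V)$, $x_m\cdots x_1\mapsto\lz x_1(-1)\cdots x_m(-1)\vac\rz_g$; (ii) check this map is multiplicative; and (iii) prove it is injective, so that its inverse is the asserted isomorphism $F$, with \eqref{eqn:zhuanti2} its value on representatives.

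The heart of step (i) is a family of congruences modulo $O_g(V)$. For $a\in\la{g}^0$ one has $\mathrm{wt}\,a(-1)\vac=1$ and $\delta_0=1$, so \eqref{eqn:circg} reads $a(-1)\vac\circ_g v=\res_x\frac{1+x}{x^2}Y(a(-1)\vac,x)v=a(-2)v+a(-1)v$, whence $a(-2)v+a(-1)v\in O_g(V)$; more generally the Zhu relations $\res_x\frac{(1+x)^{1+n}}{x^{2+n}}Y(a(-1)\vac,x)v\in O_g(V)$ $(n\geq 0)$ let one rewrite every mode $a(-n-1)$, $a\in\la{g}^0$, through $a(-1),\dots,a(-n)$, and iterating pushes all $\la{g}^0$-modes down to $-1$ while producing the sign $(-1)^{n}$ of \eqref{eqn:zhuanti1}. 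For a mode from $\la{g}^j$ with $j\neq 0$ one has instead $\delta_j=0$ and the shifted exponent $\frac{j}{T}$, so $a(-1)\vac\circ_g v=\res_x\frac{(1+x)^{j/T}}{x}Y(a(-1)\vac,x)v$ expands as an infinite binomial series; the resulting relations express any monomial containing a $\la{g}^{\neq 0}$-mode in terms of monomials with strictly fewer such modes, modulo bracket and central contributions from $[a,b]$ and $\langle a,b\rangle c$. An induction on the number and total weight of the $\la{g}^{\neq 0}$-modes then collapses every class in $V^0$ onto $\mathrm{span}\{x_1(-1)\cdots x_m(-1)\vac:x_i\in\la{g}^0\}$, giving the surjection of step (i) and the bound $\dim A_g(V)\leq\dim\cU(\la{g}^0)$, together with the $n=0$ reduction $F(\lz x(-1)v\rz_g)=F(\lz v\rz_g)x$.

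For step (ii) I would compute the $\ast_g$-products with the generators $\lz x(-1)\vac\rz_g$, $x\in\la{g}^0$ (elements of $\la{g}^{\neq 0}$ give $\ast_g=0$ by \eqref{eqn:zhug}). Applying \eqref{eqn:zhug} to $u=x(-1)\vac$ of weight $1$ and extracting the residue yields $\lz x(-1)\vac\rz_g\ast_g\lz v\rz_g=\lz x(0)v\rz_g+\lz x(-1)v\rz_g$, while the commutator identity of the twisted Zhu theory (valid for $u\in V^0$) gives $\lz x(-1)\vac\rz_g\ast_g\lz v\rz_g-\lz v\rz_g\ast_g\lz x(-1)\vac\rz_g=\lz x(0)v\rz_g$. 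Combined with the reduction $F(\lz x(-1)v\rz_g)=F(\lz v\rz_g)x$, these simultaneously produce the bracket formula \eqref{eqn:0modebrac}, the identity $F(\lz x(-1)\vac\rz_g\ast_g\lz v\rz_g)=x\,F(\lz v\rz_g)$, and hence multiplicativity of $F$ on the generators $\lz x(-1)\vac\rz_g$; as these generate $A_g(V)$, $F$ is an algebra homomorphism. Part (3) then follows by induction on $m$, and part (4) is its one-variable case (prepend $x(-n-1)$ and reorder via \eqref{eqn:0modebrac}).

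The step I expect to be the genuine obstacle is the bookkeeping for the $\la{g}^{\neq 0}$ (twisted-sector) modes in step (i). Unlike the untwisted Frenkel--Zhu situation, where $\delta_j=1$ uniformly and every reduction is a finite manipulation, the shifted exponent $\frac{j}{T}$ in \eqref{eqn:circg} turns $a(-1)\vac\circ_g v$ into an infinite binomial series, and one must verify both that the relations it produces actually eliminate all $\la{g}^{\neq 0}$-modes and that the induction on their number terminates consistently; this is precisely the content of Yang's explicit calculation. By contrast, injectivity in step (iii) is comparatively routine: via the correspondence recalled above between simple $A_g(V)$-modules and $\frac{1}{T}\ZZ$-gradable $g$-twisted $V$-modules, any $\cU(\la{g}^0)$-module is realized on the top space of an induced $g$-twisted $V$-module, so distinct ordered monomials act distinctly, forcing $\dim A_g(V)\geq\dim\cU(\la{g}^0)$ and hence that $F$ is an isomorphism.
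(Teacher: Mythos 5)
The paper gives no proof of this theorem at all---each part is imported with citations from Yang \cite{Yang-twaff} and Frenkel--Zhu \cite{FreZhu}---and your sketch is a faithful reconstruction of exactly those cited arguments: the reduction of $V^0$ modulo $O_g(V)$ to monomials in $(-1)$-modes from $\la{g}^0$ (with the twisted-sector modes eliminated pairwise through the bracket and central terms, as in \eqref{eqn:proja-1b-1}), the $\ast_g$-computations $\lz x(-1)\vac\rz_g\ast_g\lz v\rz_g=\lz x(0)v\rz_g+\lz x(-1)v\rz_g$ and the commutator identity giving \eqref{eqn:0modebrac} together with multiplicativity of $F$, and injectivity via twisted induced modules. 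The one point to tighten is injectivity: the Dong--Li--Mason correspondence you invoke applies only to \emph{simple} $A_g(V)$-modules, whereas the clean argument (and Yang's) induces a $\frac{1}{T}\ZZ$-gradable $g$-twisted module directly from an arbitrary $\la{g}^0$-module---e.g.\ the left regular representation of $\cU(\la{g}^0)$, which is not simple---so that an element of $\cU(\la{g}^0)$ vanishing in $A_g(V)$ acts as zero on $\cU(\la{g}^0)$ itself and hence is zero; with that substitution your proposal is correct and essentially identical to the proofs the paper cites.
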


\begin{defi}
  We have an action ${}_L$ of $\la{g}^0$ on $\cU(\la{g}^0)$ given by
  $x_Lu=[x,u]$ where $x\in\la{g}^0$, $u\in\cU(\la{g}^0)$.  We may and
  do extend the action ${}_L$ of $\la{g}^0$ to an action of
  $\cU(\la{g}^0)$.
\end{defi}

\begin{thm}
  Suppose that the (unique) maximal $\widehat{\la{g}}$-submodule
  $J(\la{g},k)$ of $V(\la{g},k)$ is generated by a single
  $g$-homogeneous singular vector $v$.  Let $\cU(\la{g})v$ be the
  $\la{g}$-module generated by $v$ where $x\in\la{g}$ acts on $v$ by
  $x(0)$.  Let
  \begin{align}
    \cR = \lz \cU(\la{g})v\rz_g
    =\lz\, (\cU(\la{g})v)\cap V(\la{g},k)^{0}\,\rz_g
    =\lz\, (\cU(\la{g})v)^0 \,\rz_g .
  \end{align}
  We have the following.
  \begin{enumerate}
  \item $\cR$ is a finite-dimensional module for $\cU(\la{g}^0)$ under
    the ${}_L$ action.
  \item Let $L(\la{g},k)=V(\la{g},k)/J(\la{g},k)$ be the unique simple
    quotient of $V(\la{g},k)$. Then,
    \begin{align}
      A_g(L(\la{g},k))=\frac{\cU(\la{g}^0)}{ \langle \cR \rangle}
      \label{eqn:AgLisUg0quot}
    \end{align}
    where $\langle \cR \rangle$ denotes two sided ideal of
    $\cU(\la{g}^0)$ generated by $\cR$.
  \item A $\la{g}^0$-module $M$ is a $A_g(L(\la{g},k))$-module iff
    $\langle \cR \rangle\cdot M=0$.
	\end{enumerate}
\end{thm}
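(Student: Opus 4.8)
The plan is to prove the three assertions in the order stated, identifying $A_g(V(\la{g},k))$ with $\cU(\la{g}^0)$ throughout and using the mode formulas \eqref{eqn:0modebrac} and \eqref{eqn:zhuanti1} together with \eqref{eqn:oddzerozhu}. For (1), the key observation is that each zero mode $x(0)$, $x\in\la{g}$, preserves the conformal weight; hence the whole module $\cU(\la{g})v$ sits in the single conformal-weight space of $v$, which is finite-dimensional in $V(\la{g},k)$, so $\cU(\la{g})v$ and therefore its image $\cR$ are finite-dimensional. For the ${}_L$-structure, a zero mode $x(0)$ with $x\in\la{g}^0$ preserves both the $g$-grading and $\cU(\la{g})v$, hence maps $(\cU(\la{g})v)^0$ into itself; combined with $[x,\lz w\rz_g]=\lz x(0)w\rz_g$ from \eqref{eqn:0modebrac}, this shows $\cR$ is stable under the ${}_L$-action of $\la{g}^0$, and thus of $\cU(\la{g}^0)$.

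For (2) I first reduce to an ideal identity. Since $v$ is $g$-homogeneous, $J=J(\la{g},k)$ is $g$-stable, and $\vac,\omega\notin J$; applying the isomorphism $A_g(V/I)\cong A_g(V)/A_g(I)$ with $I=J$ gives $A_g(L(\la{g},k))\cong\cU(\la{g}^0)/\lz J\rz_g$, where $\lz J\rz_g$ is a two-sided ideal and, by \eqref{eqn:oddzerozhu}, equals $\lz J\cap V^0\rz_g$. As $\cU(\la{g})v\subseteq J$ we have $\cR\subseteq\lz J\rz_g$, so $\langle\cR\rangle\subseteq\lz J\rz_g$; the content of (2) is the reverse inclusion $\lz J\cap V^0\rz_g\subseteq\langle\cR\rangle$.

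To prove this I use that $v$ is singular, so that it is annihilated by all strictly positive modes; a PBW argument then shows that $J$ is spanned by vectors $a_1(-n_1)\cdots a_r(-n_r)u$ with $a_i\in\la{g}$, $n_i\geq1$, and $u\in\cU(\la{g})v$. I then induct on the conformal weight of a $g$-even $X\in J$. The minimum weight equals $\mathrm{wt}\,v$ and is attained exactly on $\cU(\la{g})v$, giving the base case $\lz X\rz_g\in\cR$. For the inductive step write $X=a(-n)Y$ with $n\geq1$ and $Y\in J$ of strictly smaller weight. If $a\in\la{g}^0$, then $Y$ is $g$-even and \eqref{eqn:zhuanti1} gives $\lz X\rz_g=(-1)^{n-1}\lz Y\rz_g\,a$, which lies in $\langle\cR\rangle$ by the inductive hypothesis since $\langle\cR\rangle$ is a right ideal. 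The essential case is $a\in\la{g}^j$ with $j\neq0$, where $Y\in V^{-j}$ so $\lz Y\rz_g=0$ and no right-multiplication formula is available: here I invoke the relation $(a(-n)\vac)\circ_g Y\equiv 0$ in $A_g(V)$. Expanding \eqref{eqn:circg}, a direct computation of the modes of $a(-n)\vac\in V^j$ shows they are scalar multiples of $a(q-n+1)$ with the coefficient of $a(-n)Y$ equal to $1$, so this relation solves for $\lz a(-n)Y\rz_g$ as a linear combination of the $\lz a(m)Y\rz_g$ with $m\geq -n+1$; each $a(m)Y$ is $g$-even, lies in $J$, and has conformal weight $<\mathrm{wt}\,X$, so the inductive hypothesis places every term in $\langle\cR\rangle$. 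This closes the induction, yields $\lz J\cap V^0\rz_g=\langle\cR\rangle$, and proves (2). Part (3) is then formal: a $\la{g}^0$-module $M$ is a $\cU(\la{g}^0)$-module, and the action descends to the quotient $A_g(L(\la{g},k))=\cU(\la{g}^0)/\langle\cR\rangle$ precisely when $\langle\cR\rangle\cdot M=0$.

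The \emph{main obstacle} is the case $a\in\la{g}^j$, $j\neq0$, of the inductive step. In the untwisted Frenkel--Zhu setting every negative mode becomes a right multiplication, which trivializes the argument, but elements of $\la{g}^j$ with $j\neq0$ admit no such description; the point is to route them through the twisted product $\circ_g$ and to verify that all resulting terms strictly lower the conformal weight, so that the induction bottoms out at $\cR=\lz\cU(\la{g})v\rz_g$. Getting the bookkeeping of the $\circ_g$-relations and the weight drop exactly right is where the twisted case genuinely departs from the classical one.
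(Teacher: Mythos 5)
Your proof is correct, and its skeleton matches the paper's: reduce via $A_g(V/J)\cong A_g(V)/\lz J\rz_g$ and \eqref{eqn:oddzerozhu} to the identity $\lz J\rz_g=\langle\cR\rangle$, get the easy inclusion from $\cU(\la{g})v\subseteq J$, and prove the reverse inclusion by induction using the same PBW spanning set $a_1(-n_1)\cdots a_r(-n_r)u$, $u\in\cU(\la{g})v$, with \eqref{eqn:zhuanti1} disposing of $\la{g}^0$-modes and the Dong--Li--Mason relation $\res_x\frac{(1+x)^{j/T}}{x^{m+1}}Y(a,x)w\in O_g(V)$ (equivalently, your $(a(-n)\vac)\circ_g Y$, whose leading coefficient is indeed $1$) disposing of modes from $\la{g}^j$, $j\neq 0$. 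Where you genuinely depart is the organization of the induction: the paper inducts on the number of negative-mode factors and, in the twisted case, iterates the $O_g$-relation to drive $a_1(-n_1-1)$ all the way down to the zero mode $a_1(0)$, commutes that zero mode rightward past the remaining factors, and absorbs it into $x\in\cU(\la{g})v$ (modulo ``shorter terms'' that must be tracked); you instead induct on conformal weight, so that after a \emph{single} application of the relation every correction term $a(m)Y$, $m\geq -n+1$ -- including the nonnegative modes that the paper transports to the singular-vector side -- is simply a $g$-even element of $J$ of strictly smaller weight, swallowed by the inductive hypothesis. This buys you a cleaner argument with no zero-mode migration and no bookkeeping of lower-order terms; the price, which you pay correctly, is that the base case must identify the bottom conformal-weight space of $J$ with $\cU(\la{g})v$, which is exactly where the definition $\cR=\lz(\cU(\la{g})v)^0\rz_g$ (rather than, say, $\lz\cU(\la{g}^0)v\rz_g$) is used -- the paper needs the full $\la{g}$-module for the same reason, but at the transport step rather than at the base case. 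Your parts (1) and (3) coincide with the paper's (the paper leaves (3) implicit), and your identification of the $j\neq 0$ case as the genuinely twisted obstacle is exactly right.
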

\begin{proof}
  This theorem is analogous to the corresponding theorems in the
  untwisted setup, and in the twisted setting, our proof is very
  similar to the proof of \cite[Thm.\ 6.3]{Yang-twaff}.
	
  All elements of $\cR$ have the same conformal weight as that of $v$,
  and each conformal weight space of $V(\la{g},k)$ is
  finite-dimensional, hence $\cR$ is finite-dimensional. The fact that
  $\cR$ is closed under ${}_L$ action is immediate from
  \eqref{eqn:0modebrac}.

  For the second assertion, it is enough prove that
  $\lz J(\la{g},k) \rz_g = \langle \cR\rangle$.  Observe that if $X$
  is a subspace of $\cU(\la{g}^0)$ that is closed under the $_L$
  action and also under the right multiplication by $\cU(\la{g}^0)$
  then, $X$ is a two-sided ideal.  Indeed, for $a\in\la{g}^0$ and
  $x\in X$, we have $ax=[a,x]-xa$, and both terms on the right-hand
  side belong to $X$, giving us the closure of $X$ under the
  left-action.  In light of \eqref{eqn:0modebrac},
  $\lz J(\la{g},k)\rz_g$ is closed under ${}_L$ and
  \eqref{eqn:zhuanti1} implies that it is also closed under the right
  action of $\cU(\la{g}^0)$. Thus, it is a two sided ideal. Clearly,
  $\cR\subset \lz J(\la{g},k)\rz_g$, and thus
  $\langle\cR\rangle\subset \lz J(\la{g},k)\rz_g$.

  For the reverse inclusion, note that $J(\la{g},k)$ is spanned by
  terms of the sort
  \begin{align}
    y=a_1(-n_1-1)a_2(-n_2-1)\cdots a_t(-n_t-1)x,
  \end{align}
  where $a_i\in\la{g}$ are $g$-homogeneous and are arranged so that
  all $a_i$'s in $\la{g}^0$ are to the right, $n_i\in\ZZ_{\geq 0}$ and
  $x$ is a $g$-homogeneous element of $\cU(\la{g})v$.  We proceed by
  induction on $t$. The case for $t=0$ is clear: $\lz x\rz_g\in \cR$.
  Now let $t>0$.  If all $a_i$'s and $x$ are already fixed by $g$ then
  \eqref{eqn:zhuanti1} immediately tells us that
  $\lz y\rz_g\in \langle \cR\rangle$.  Suppose that $y$ is fixed by
  $g$ (otherwise its projection is $0$ anyway) and that $a_1$ is in
  $\la{g}^r$, $1\leq r\leq T-1$.  We have the following relation
  \cite{DongLiMas-twZhu}:
  \begin{align}
    \res_x\frac{(1+x)^{r/T}}{x^{m+1}}Y(a_1(-1)\vac,x)v = a_1(-m-1)v + \frac{r}{T}a_1(-m)v + \cdots  \in O_g(V(\la{g},k))
  \end{align}
  for all $v\in V(\la{g},k)$ and $m\geq 0$. Repeating this relation
  for $m=n_1, n_1-1,\dots$, it is clear that for some scalar $\alpha$,
  \begin{align}
    y&\equiv_{O_g(V(\la{g},k))} \alpha\cdot a_1(0)a_2(-n_2-1)\cdots a_t(-n_t-1)x + \mathrm{shorter\,terms}\\
     &\equiv_{O_g(V(\la{g},k))} \alpha \cdot a_2(-n_2-1)\cdots a_t(-n_t-1)a_1(0)x + \mathrm{shorter\,terms}.
  \end{align}
  We may similarly peel off all the elements $a_2, \cdots, a_j$ which
  are not in $\la{g}^0$ and put them near $x$.  We thus see, for some
  scalar $\alpha'$:
  \begin{align}
    y&\equiv_{O_g(V(\la{g},k))} \alpha'\cdot a_{j+1}(-n_2-1)\cdots a_t(-n_t-1)\cdot a_j(0)\cdots a_1(0)x + \mathrm{shorter\,terms}.
  \end{align}
  Since $y$ and $a_{j+1}\cdots a_t$ are all fixed by $g$,
  $a_j(0)\cdots a_2(0)a_1(0)x \in(\cU(\la{g})v)^0$. Now, again,
  \eqref{eqn:zhuanti1} and induction hypothesis give us that
  $\lz y\rz_g\in\langle \cR\rangle$.
\end{proof}

Now we recall a couple of important results that form the basis of all
our calculations.

\begin{defi}
  Recall that $\cR$ is a $\la{g}^0$ module under the $_L$ action. We
  have already chosen a Cartan subalgebra for $\la{g}^0$, namely
  $\la{h}^0$.  Let $\cR_0$ be the weight $0$ subspace of $\cR$ with
  respect $\la{h}^0$.
\end{defi}

\begin{thm}(\cite[Lem.\ 3.4.3]{AdaMil-sl2}, \cite[Prop.\
  13]{Per-typeB}) Let $L(\lambda)$ be an irreducible highest-weight
  $\la{g}^0$-module with highest-weight $\lambda$ and a highest-weight
  vector $v_\lambda$. The following statements are equivalent.
  \begin{enumerate}
  \item $L(\lambda)$ is an $A_g(L(\la{g},k))$-module.
  \item $\cR\cdot L(\lambda)=0$.
  \item $\cR_0\cdot v_\lambda=0$.
  \end{enumerate}
\end{thm}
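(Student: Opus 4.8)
The plan is to establish the two non-trivial equivalences (1)$\Leftrightarrow$(2) and (2)$\Leftrightarrow$(3); the first follows from the preceding theorem together with an elementary ideal argument, while the second reduces to a highest-weight argument inside $L(\lambda)$.

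First I would dispatch (1)$\Leftrightarrow$(2). By the previous theorem, $L(\lambda)$ is an $A_g(L(\la{g},k))$-module if and only if $\langle\cR\rangle\cdot L(\lambda)=0$, so it suffices to note that, for a $\cU(\la{g}^0)$-module, annihilation by the two-sided ideal $\langle\cR\rangle$ is equivalent to annihilation by $\cR$ itself. The forward direction is immediate from $\cR\subseteq\langle\cR\rangle$; conversely, since $\langle\cR\rangle$ is spanned by elements $a\,r\,b$ with $a,b\in\cU(\la{g}^0)$, $r\in\cR$, and $b\cdot m\in L(\lambda)$ for every $m$, the hypothesis $\cR\cdot L(\lambda)=0$ forces $(a\,r\,b)\cdot m=a\,r\,(b\cdot m)=0$. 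The implication (2)$\Rightarrow$(3) is then trivial, since $\cR_0\subseteq\cR$ and $v_\lambda\in L(\lambda)$.

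The substance is (3)$\Rightarrow$(2), which I would argue in two stages. Recall from the previous theorem that $\cR$ is closed under the ${}_L$-action, hence $\cR=\bigoplus_\mu\cR_\mu$ is $\la{h}^0$-weight graded with $\cR_0$ the weight-$0$ part. Consider the linear map $\phi:\cR\to L(\lambda)$, $\phi(r)=r\cdot v_\lambda$, where $r$ acts via the ambient action of $\cU(\la{g}^0)$; it shifts $\la{h}^0$-weights by $\lambda$, so $\phi(\cR_\mu)\subseteq L(\lambda)_{\lambda+\mu}$. The key point is that $\phi(\cR)$ is an $\la{n}_+^0$-submodule of $L(\lambda)$: for $x\in\la{n}_+^0$ and $r\in\cR$, using $x\cdot v_\lambda=0$ and $[x,r]=x_L r\in\cR$, one gets $x\cdot\phi(r)=(xr)\cdot v_\lambda=[x,r]\cdot v_\lambda=\phi(x_L r)\in\phi(\cR)$. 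Since $\cR$ is finite-dimensional, $\phi(\cR)$ is a finite-dimensional $\la{n}_+^0$-submodule, and I would show it must vanish. Suppose $\phi(\cR)\neq 0$ and pick a vector $w$ of maximal $\la{h}^0$-weight in it; by maximality $\la{n}_+^0\cdot w=0$. In an irreducible highest-weight module the only vectors annihilated by $\la{n}_+^0$ are multiples of $v_\lambda$ (if $\la{n}_+^0\cdot w=0$ then $\cU(\la{g}^0)w=\cU(\la{n}_-^0)w$ is a highest-weight submodule, which by irreducibility equals $L(\lambda)$, forcing $w\in\CC v_\lambda$). Thus $v_\lambda\in\phi(\cR)=\cR\cdot v_\lambda$, so $v_\lambda=r\cdot v_\lambda$ for some $r\in\cR$; comparing the weight-$\lambda$ components and using $\phi(\cR_\mu)\subseteq L(\lambda)_{\lambda+\mu}$ shows that the component $r_0\in\cR_0$ of $r$ already satisfies $r_0\cdot v_\lambda=v_\lambda$, contradicting (3). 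Hence $\cR\cdot v_\lambda=0$. Finally I would propagate this to all of $L(\lambda)=\cU(\la{n}_-^0)v_\lambda$ by induction on the number of lowering operators: for $f\in\la{n}_-^0$, $r\in\cR$ and $w$ of smaller depth, $r\cdot(f w)=f\cdot(r\cdot w)-[f,r]\cdot w$, where $r\cdot w=0$ by induction and $[f,r]\in\cR$ also kills $w$ by induction, giving $\cR\cdot L(\lambda)=0$, which is (2).

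The main obstacle is this last implication $\cR\cdot v_\lambda=0\Rightarrow\cR\cdot L(\lambda)=0$ together with the vanishing of $\phi(\cR)$; both hinge on exploiting the ${}_L$-stability of $\cR$ (to keep all brackets inside $\cR$) and the highest-weight structure of $L(\lambda)$. One must be careful that $L(\lambda)$ may be infinite-dimensional, so the argument should rely only on the fact that its sole $\la{n}_+^0$-singular vectors are the multiples of $v_\lambda$, rather than on any finiteness of $L(\lambda)$ itself.
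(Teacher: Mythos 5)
Your proof is correct. Note that the paper itself offers no proof of this theorem --- it is imported with citations to \cite[Lem.\ 3.4.3]{AdaMil-sl2} and \cite[Prop.\ 13]{Per-typeB} --- so the comparison is with those references, whose strategy you have essentially reconstructed: (1)$\Leftrightarrow$(2) follows from the identification $A_g(L(\la{g},k))\cong\cU(\la{g}^0)/\langle\cR\rangle$ together with the elementary fact that a module annihilated by $\cR$ is annihilated by the two-sided ideal it generates, while the substantive implication (3)$\Rightarrow$(2) rests on exactly the two mechanisms you isolate: the ${}_L$-weight grading of $\cR$ makes $\cR\cdot v_\lambda$ a finite-dimensional $\la{n}_+^0$-stable sum of weight spaces whose maximal-weight vectors are singular and hence (by irreducibility and weight comparison) proportional to $v_\lambda$, forcing $\cR\cdot v_\lambda=0$ from $\cR_0\cdot v_\lambda=0$; and the bracket induction $r\cdot(fw)=f\cdot(rw)-(f_{L}r)\cdot w$, valid because $f_{L}r\in\cR$, propagates this annihilation down $L(\lambda)=\cU(\la{n}_-^0)v_\lambda$. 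All the delicate points are handled properly, including that $w$ can be taken to be a weight vector maximal in the partial order, and that only one-dimensionality of $L(\lambda)_\lambda$, not finite-dimensionality of $L(\lambda)$, is used.
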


\begin{defi}
  In the notation of the previous theorem, for every $r\in\cR_0$ there
  exists a (unique) polynomial $p_r\in \la{S}(\la{h}^0)$ such that
  $rv_{\lambda}=p_r(\lambda)v_\lambda$.  Define
  $\cP_0=\{ p_r\,|\, r\in\cR_0\}$. 
\end{defi}	

We immediately have:
\begin{cor}\label{cor:onetoonepolymod}(\cite[Cor.\ 2.10]{Per-typeA})
  There is a one-to-one correspondence between:
  \begin{enumerate}
  \item Irreducible, highest-weight $A_g(L(\la{g},k))$ modules and
  \item weights $\lambda\in(\la{h}^0)^\ast$ such that $p(\lambda)=0$
    for all $p\in\cP_0$.
  \end{enumerate}
\end{cor}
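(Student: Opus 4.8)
The plan is to deduce the corollary directly from the preceding theorem together with the identification $A_g(L(\la{g},k))\cong \cU(\la{g}^0)/\langle\cR\rangle$ established earlier. First I would observe that, since $A_g(L(\la{g},k))$ is the quotient of $\cU(\la{g}^0)$ by the two-sided ideal $\langle\cR\rangle$, a module over the associative algebra $A_g(L(\la{g},k))$ is exactly a $\cU(\la{g}^0)$-module on which $\langle\cR\rangle$ acts as zero, and the triangular decomposition $\la{g}^0=\la{n}_-^0\oplus\la{h}^0\oplus\la{n}_+^0$ lets the notion of a highest-weight module descend from $\la{g}^0$ to $A_g(L(\la{g},k))$. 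Thus an irreducible highest-weight $A_g(L(\la{g},k))$-module is precisely an irreducible highest-weight $\la{g}^0$-module $L(\lambda)$ that is annihilated by $\langle\cR\rangle$.

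Next I would invoke the classical classification of irreducible highest-weight modules over the finite-dimensional simple Lie algebra $\la{g}^0\cong B_l$: up to isomorphism they are exactly the $L(\lambda)$ for $\lambda\in(\la{h}^0)^\ast$, and the assignment $L(\lambda)\leftrightarrow\lambda$ is a bijection because the highest weight is recovered intrinsically as the $\la{h}^0$-weight of the (one-dimensional) space of highest-weight vectors. With this in hand, the corollary reduces to pinning down exactly which $\lambda$ yield modules killed by $\langle\cR\rangle$.

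For that step I would apply the previous theorem, whose equivalence of conditions $(1)$ and $(3)$ states that $L(\lambda)$ is an $A_g(L(\la{g},k))$-module iff $\cR_0\cdot v_\lambda=0$. Here I note that $\langle\cR\rangle\cdot L(\lambda)=0$ is equivalent to $\cR\cdot L(\lambda)=0$ since $\cR$ generates the ideal and $L(\lambda)$ is a module, and the theorem then further reduces testing on all of $L(\lambda)$ to testing the weight-zero part $\cR_0$ on the single highest-weight vector $v_\lambda$. Now by the definition of $\cP_0$, each $r\in\cR_0$ acts on $v_\lambda$ by the scalar $p_r(\lambda)$, so $\cR_0\cdot v_\lambda=0$ holds iff $p_r(\lambda)=0$ for every $r\in\cR_0$, that is, iff $p(\lambda)=0$ for all $p\in\cP_0$. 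Combining the three steps, the bijection $L(\lambda)\leftrightarrow\lambda$ restricts to the asserted one-to-one correspondence.

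I do not expect a genuine obstacle here: the substantive work has already been packaged into the preceding theorem, whose real content is the reduction from the full ideal $\langle\cR\rangle$ to the finite-dimensional weight-zero space $\cR_0$ evaluated on a single highest-weight vector. The only points requiring care are purely formal — checking that annihilation by $\langle\cR\rangle$ and by $\cR$ coincide, and that $\lambda\mapsto L(\lambda)$ is injective on isomorphism classes — and both are immediate. If anything, I would record for completeness that $\cR_0$ is finite-dimensional, so that $\cP_0$ is a finite collection of polynomials and its common zero locus is a well-behaved condition on $\lambda$; this finiteness is exactly part $(1)$ of the preceding theorem.
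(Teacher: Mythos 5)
Your proposal is correct and matches the paper, which derives this corollary immediately from the preceding theorem (the equivalence $L(\lambda)$ is an $A_g(L(\la{g},k))$-module $\Leftrightarrow \cR_0\cdot v_\lambda=0$) together with the definition of $\cP_0$, exactly as you do; your write-up merely makes the implicit steps (ideal annihilation versus $\cR$-annihilation, injectivity of $\lambda\mapsto L(\lambda)$) explicit. One small attribution slip: finite-dimensionality of $\cR_0$ comes from the earlier theorem establishing that $\cR$ is finite-dimensional, not from the theorem stating the three equivalent conditions.
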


We now present some calculations that will be used below. Let
$a\in\la{g}^j$, $0< j< T$, $b\in\la{g}$. Then,
\begin{align}
  &(a(-1)\vac)\circ_g(b(-1)\vac)\nonumber\\
  &=\res_x\left(
    \dfrac{(1+x)^{j/T}}{x}
    (\cdots + a(-1)b(-1)\vac x^0 + a(0)b(-1)\vac x^{-1}+  a(1)b(-1)\vac x^{-2})
    \right)\nonumber\\
  &=\res_x\left(
    \left(\sum_{n\geq 0}{j/T\choose n}x^{n-1}\right)
    (\cdots + a(-1)b(-1)\vac x^0 + a(0)b(-1)\vac x^{-1}+  a(1)b(-1)\vac x^{-2})
    \right)\nonumber\\
  &=a(-1)b(-1)\vac + \frac{j}{T}[a,b](-1)\vac +\frac{j(j-T)}{2T^2}k\langle a,b\rangle\vac.
\end{align}
This implies that for $a\in\la{g}^j$, ($0<j<T$) and $b\in \la{g}$,
\begin{align}
  a(-1)b(-1)\vac \equiv_{O_g(V)} -\frac{j}{T}[a,b](-1)\vac -\frac{j(j-T)}{2T^2}k\langle a,b\rangle\vac.
\end{align}
Or, equivalently,
\begin{align}
  \lz a(-1)b(-1)\vac\rz_g = -\frac{j}{T}\lz[a,b](-1)\vac\rz_g -\frac{j(j-T)}{2T^2}k\langle a,b\rangle\lz\vac\rz_g.
\label{eqn:proja-1b-1}
\end{align}
Since $\langle\cdot,\cdot\rangle$ is $g$-invariant, both sides are
zero if $b^{(T-j)}=0$.

For general elements $a,b\in\la{g}$, we have:
\begin{align}
  a(-1)&b(-1)\vac=(a^{(0)}+\cdots+ a^{(T-1)})(-1)(b^{(0)}+\cdots+ b^{(T-1)})(-1)\vac\nonumber\\
       &=\left(a^{(0)}(-1)b^{(0)}(-1)\vac + a^{(1)}(-1)b^{(T-1)}(-1)\vac +\cdots+
         a^{(T-1)}(-1)b^{(1)}(-1)\vac\right) +\dots
\end{align}	
where the last ellipses denote terms that are in
$V^{(1)}\oplus\cdots \oplus V^{(T-1)}$.  So, using
\eqref{eqn:oddzerozhu} and \eqref{eqn:proja-1b-1}
\begin{align}
  &\lz a(-1)b(-1)\vac\rz_g\nonumber\\
  &	=\lz a^{(0)}(-1)b^{(0)}(-1)\vac\rz_g
    -\sum_{0<j<T}\frac{j}{T}\lz[a^{(j)},b^{(T-j)}](-1)\vac\rz_g
    -\sum_{0<j<T}\frac{j(j-T)}{2T^2}k\langle a^{(j)},b^{(T-j)}\rangle\lz\vac\rz_g
    .\label{eqn:projab}
\end{align}	
Henceforth, we will drop the subscript $_g$.

\section{$\nu$-Twisted Zhu algebra for $L(\ourla,\ourlev)$}

Fix $l\in\ZZ_{>0}$ and let $\la{g}=\la{sl}_{2l+1}$ as before and let
$k=\ourlev$.  Recall that $V(\la{g},k)$ is the (universal) generalized
Verma module VOA and $J(\la{g},k)$ is its (unique) maximal proper
ideal.
\begin{thm}
  From \cite{Per-typeA} we have:
  \begin{enumerate}[leftmargin=*]
  \item The vector
    \begin{align}
      {v}=\sum_{i=1}^{2l} \frac{2l-2i+1}{2l+1}E_{\theta}(-1)H_i(-1)\vac
      +\sum_{i=1}^{2l-1}E_{1,i+1}(-1)E_{i+1,2l+1}(-1)\vac
      -\frac{1}{2}(2l-1)E_\theta(-2)\vac
    \end{align}
    is a singular vector in $V(\la{g},k)$.
  \item The ideal $J(\la{g},k)$ is generated by $v$, that is,
    $J(\la{g},k)=\cU(\widehat{\la{g}})v$.
  \end{enumerate}
\end{thm}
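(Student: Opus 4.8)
The statement is due to Per\v{s}e \cite{Per-typeA}, so the task is to reconstruct its proof, and I would separate it into the singularity of $v$ (part (1)) and the generation of $J(\la{g},k)$ (part (2)), the second being the substantive one. For part (1) the plan is a direct verification. First I would record that $v$ is a weight vector: each of the three families of summands, $E_\theta(-1)H_i(-1)\vac$, $E_{1,i+1}(-1)E_{i+1,2l+1}(-1)\vac$, and $E_\theta(-2)\vac$, carries $\la{g}$-weight equal to the highest root $\theta=\epsilon_1-\epsilon_{2l+1}$ and conformal weight $2$, so $v$ is homogeneous. It then suffices to check annihilation by the affine Chevalley raising generators $E_i(0)$ ($i=1,\dots,2l$) and $E_{2l+1,1}(1)$, since these generate $\widehat{\la{n}}_+$ and $v$ is a weight vector.

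The finite checks $E_i(0)v=0$ reduce to commutators in $\la{sl}_{2l+1}$: because $\theta$ is the highest root, $[E_i,E_\theta]=0$, so $E_i(0)$ acts only through the factors $H_i(-1)$ and through the family $E_{1,i+1}(-1)E_{i+1,2l+1}(-1)\vac$, where the action telescopes across neighboring indices. The linear coefficients $\tfrac{2l-2i+1}{2l+1}$ are tuned precisely so that these contributions cancel in pairs. The affine check $E_{2l+1,1}(1)v=0$ is where the level enters: the mode $E_{2l+1,1}(1)$ lowers conformal weight by one and, through the central term $m\,\delta_{m+n,0}\langle\cdot,\cdot\rangle c$, produces a contribution proportional to $k$ plus a constant; setting $k=-l-\tfrac12$ makes it vanish. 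This is a finite, if tedious, computation that I would organize by weight.

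For part (2), the inclusion $\langle v\rangle=\cU(\widehat{\la{g}})v\subseteq J(\la{g},k)$ is free, since $v$ is a nonzero singular vector of positive conformal weight and $J(\la{g},k)$ is the unique maximal proper submodule. The content is the reverse inclusion, for which I would exploit that $k=-l-\tfrac12=-(2l+1)+\tfrac{2l+1}{2}$ is a boundary admissible level for $\widehat{\la{sl}}_{2l+1}$ (denominator $u=2$, coprime to $2l+1$). The cleanest route is a character argument: $V(\la{g},k)/\langle v\rangle$ is a quotient of the generalized Verma module that surjects onto the simple admissible module $L(\la{g},k)$, whose character is given by the Kac--Wakimoto formula \cite{KacWak-modinv}; if these two characters agree, then $\langle v\rangle=J(\la{g},k)$. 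Alternatively, one locates all singular vectors of the ordinary Verma module $M(k\Lambda_0)$ via the Kac--Kazhdan determinant formula and the linkage principle, tracks which survive the passage to $V(\la{g},k)$, and argues that at boundary admissible level the integral Weyl-group data collapse enough that $v$ is the only independent generator.

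The main obstacle is precisely this reverse inclusion: ruling out further generators of $J(\la{g},k)$ beyond $v$. Carrying this out demands control of the full embedding structure at the admissible level — either the complete Kac--Kazhdan linkage diagram, to see that every singular vector of $V(\la{g},k)$ already lies in $\cU(\widehat{\la{g}})v$, or an exact matching of the Kac--Wakimoto character with that of $V(\la{g},k)/\langle v\rangle$. The singularity computation of part (1), by contrast, is routine once organized by weight.
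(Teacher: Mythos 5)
Your proposal is mathematically reasonable, but it takes a genuinely different route from the paper, because the paper does not reprove Per\v{s}e's theorem at all: it cites \cite{Per-typeA} for both the singularity of $v$ and the generation $J(\la{g},k)=\cU(\widehat{\la{g}})v$, and its ``proof'' consists solely of translating Per\v{s}e's expression into the paper's conventions. Concretely, the paper takes the negative of Per\v{s}e's singular vector, applies the dictionary $e_{\epsilon_i-\epsilon_j}=-(-1)^{i-j}E_{i,j}$ (so $e_\theta=-E_\theta$), and commutes $H_i(-1)$ past $E_\theta(-1)$ --- only $i=1$ and $i=2l$ contribute $E_\theta(-2)\vac$ terms, and these cancel --- to arrive at the displayed formula for $v$. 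Your reconstruction of the underlying mathematics is sound in outline and close to what Per\v{s}e actually does: singularity is a finite verification against $E_i(0)$ ($i=1,\dots,2l$) and the affine raising mode $E_{2l+1,1}(1)$, with the level $k=-l-\tfrac{1}{2}$ entering only through the central term, and the generation statement rests on admissibility of $k\Lambda_0$ (boundary admissible, denominator $u=2$ coprime to $2l+1$), via the structure of singular vectors of Verma modules at admissible weights or a character comparison. What your approach buys is an actual proof of the imported result; what the paper's buys is brevity, and it isolates the one point that genuinely requires verification in its context --- that its $v$ coincides with Per\v{s}e's vector under the change of conventions --- which your blind write-up naturally could not address. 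One caveat: your part (2) remains a program (two alternative strategies, with the reverse inclusion correctly flagged as the hard step) rather than a completed argument; this is acceptable here only because the paper itself outsources exactly that step to \cite{Per-typeA}.
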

\begin{proof}
  Our notation is slightly different from \cite{Per-typeA}.  Negative
  of the singular vector given in \cite{Per-typeA} is:
  \begin{align}
    -\sum_{i=1}^{2l} \frac{2l-2i+1}{2l+1} H_i(-1)e_\theta(-1)\vac
    + \sum_{i=1}^{2l-1}e_{\epsilon_1-\epsilon_{i+1}}(-1)e_{\epsilon_{i+1}-\epsilon_{2l+1}}(-1)\vac 
    + \frac{1}{2}(2l-1)e_\theta(-2)\vac,
  \end{align}
  where they define for $i<j$
  \begin{align}
    e_{\epsilon_i-\epsilon_j}= [E_{j-1},[E_{j-2},[\cdots[E_{i+1},E_i]\cdots]]],
    \quad e_{\theta}= e_{\epsilon_1-\epsilon_{2l+1}}.
  \end{align}
  It can be seen that
  \begin{align}
    e_{\epsilon_i-\epsilon_j}=-(-1)^{i-j}E_{i,j},\quad e_{\theta}=-E_{\theta}.
  \end{align}
  We thus get
  \begin{align}
    \sum_{i=1}^{2l} \frac{2l-2i+1}{2l+1} H_i(-1)E_\theta(-1)\vac 
    + \sum_{i=1}^{2l-1}E_{1,i+1}(-1)E_{i+1,2l+1}(-1)\vac
    - \frac{1}{2}(2l-1)E_\theta(-2)\vac.
  \end{align}
  In the first summation, $[H_i(-1),E_{\theta}(-1)]=0$ if $1<i<2l$ and
  $[H_i(-1),E_{\theta}(-1)]=E_{\theta}(-2)$ if $i=1,2l$. We thus get
  the required formula.
\end{proof}

\begin{lem}\label{lem:viseven}
  We have $\nu(v)=v$.
\end{lem}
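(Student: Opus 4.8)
The plan is to exploit that $\nu$, extended to an automorphism of the VOA $V(\la{g},k)$, acts on states by $\nu(a(n)w)=(\nu a)(n)\,\nu(w)$ with $\nu(\vac)=\vac$; consequently $\nu(v)$ is computed simply by replacing each matrix unit appearing in $v$ by its $\nu$-image, keeping the mode numbers and the operator ordering intact. First I would record the relevant images directly from $\nu(E_{i,j})=-(-1)^{i-j}E_{2l+2-j,2l+2-i}$: one gets $\nu(E_\theta)=-E_\theta$, $\nu(H_i)=H_{2l+1-i}$, $\nu(E_{1,i+1})=-(-1)^iE_{2l+1-i,2l+1}$ and $\nu(E_{i+1,2l+1})=-(-1)^iE_{1,2l+1-i}$. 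Then I would treat the three summands of $v$ separately.

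For the first summand $\sum_i \frac{2l-2i+1}{2l+1}E_\theta(-1)H_i(-1)\vac$, applying $\nu$ pulls out the sign from $\nu(E_\theta)=-E_\theta$ and sends $H_i\mapsto H_{2l+1-i}$; reindexing $j=2l+1-i$ and using the antisymmetry $2l-2i+1=-(2l-2j+1)$ of the coefficients under this substitution, the two sign changes cancel and the summand is returned unchanged. For the middle summand $\sum_{i=1}^{2l-1}E_{1,i+1}(-1)E_{i+1,2l+1}(-1)\vac$, the two factors each contribute a sign $-(-1)^i$, whose product is $+1$, and after reindexing $m=2l+1-i$ the result is $\sum_{m=2}^{2l}E_{m,2l+1}(-1)E_{1,m}(-1)\vac$ — the same factors, but in the opposite operator order. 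Here the crucial computation is the reordering: since $-1+(-1)\neq 0$ there is no central term, and $[E_{m,2l+1}(-1),E_{1,m}(-1)]=[E_{m,2l+1},E_{1,m}](-2)=-E_\theta(-2)$, so restoring the original order in each of the $2l-1$ terms recovers the original middle summand at the cost of an extra $-(2l-1)E_\theta(-2)\vac$.

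Finally, the last summand $-\frac{1}{2}(2l-1)E_\theta(-2)\vac$ simply flips sign under $\nu(E_\theta)=-E_\theta$, becoming $+\frac{1}{2}(2l-1)E_\theta(-2)\vac$. Adding the three contributions, the leftover $-(2l-1)E_\theta(-2)\vac$ from the middle summand and the sign-flipped last summand combine to $-\frac{1}{2}(2l-1)E_\theta(-2)\vac$, exactly reproducing the original last summand; hence $\nu(v)=v$. I expect the only genuine obstacle to be the bookkeeping in the middle summand — correctly tracking the reindexing together with the reversal of operator order, and checking that the commutator term it generates conspires precisely with the sign change of the $E_\theta(-2)\vac$ term rather than spoiling invariance.
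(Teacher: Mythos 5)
Your proposal is correct and follows essentially the same route as the paper: apply $\nu$ factorwise using $\nu(E_\theta)=-E_\theta$, $\nu(H_i)=H_{2l+1-i}$, reindex each sum, and reorder the middle summand via $[E_{i+1,2l+1}(-1),E_{1,i+1}(-1)]=-E_\theta(-2)$ (no central term since the modes sum to $-2$), so that the resulting $-(2l-1)E_\theta(-2)\vac$ combines with the sign-flipped last term to restore $-\frac{1}{2}(2l-1)E_\theta(-2)\vac$. All the sign bookkeeping in your argument, including the cancellation $\bigl(-(-1)^i\bigr)^2=1$ in the middle summand and the antisymmetry $2l-2i+1\mapsto-(2l-2j+1)$ under $j=2l+1-i$, matches the paper's computation exactly.
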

\begin{proof}
  We have:
  \begin{align}
    &\nu(v)\nonumber\\
    &=\sum_{i=1}^{2l} -\frac{2l-2i+1}{2l+1}E_{\theta}(-1)H_{2l+1-i}(-1)\vac
      +\sum_{i=1}^{2l-1}E_{2l-i+1,2l+1}(-1)E_{1,2l-i+1}(-1)\vac
      +\frac{2l-1}{2}E_\theta(-2)\vac
      \nonumber\\
    &=\sum_{i=1}^{2l} \frac{2l-2i+1}{2l+1}E_{\theta}(-1)H_{i}(-1)\vac
      +\sum_{i=1}^{2l-1}E_{i+1,2l+1}(-1)E_{1,i+1}(-1)\vac
      +\frac{2l-1}{2}E_\theta(-2)\vac\nonumber\\
    &=\sum_{i=1}^{2l} \frac{2l-2i+1}{2l+1}E_{\theta}(-1)H_{i}(-1)\vac
      +\sum_{i=1}^{2l-1}\left(E_{1,i+1}(-1)E_{i+1,2l+1}(-1)-E_{1,2l+1}(-2)\right)\vac
      +\frac{2l-1}{2}E_\theta(-2)\vac\nonumber\\
    &=v,
  \end{align}
  where the first equality follows by definition of $\nu$ and the
  second by re-indexing the summations.
\end{proof}

Our next task is to calculate enough information about
$\cR=\lz \cU(\la{g})v\rz$ so that we can use Corollary
\ref{cor:onetoonepolymod}.  The $\la{g}$-weight of $v$ is $\theta$,
and as $\la{g}$-module, $\cU(\la{g})v$ is isomorphic to the adjoint
module of $\la{g}$ with $v\mapsto E_\theta$. As $\la{g}^0$-modules, we
then have $\cU(\la{g})v\cong \la{g}^0\oplus\la{g}^1$. Note that
$E_\theta\in\la{g}^1$ and so $\cU(\la{g}^0)v\cong \la{g}^1$ as
$\la{g}^0$-modules. Since $v$ is $\nu$-fixed, we have
$\cR =\lz \cU(\la{g})v\rz=\lz \cU(\la{g}^0)v\rz$.  From Section
\ref{sec:ahom} we know that $\dim(\cR_0)=\dim((\la{g}^1)_0)=l$ and
thus we seek $l$ independent polynomials in $\cP_0$.

\begin{lem}
  The projection of $v$ on the twisted Zhu algebra is given by the
  following formula:
  \begin{align}
    \lz v \rz =\sum_{i=1}^{2l-1}E^+_{i+1,2l+1} E_{1,i+1}^+.
  \end{align}
\end{lem}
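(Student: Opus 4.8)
The plan is to project each of the three pieces of $v$ separately, using the specialization of \eqref{eqn:projab} to our situation. Here the order of $\nu$ is $T=2$, so the $\nu$-eigenspaces are indexed by $\ZZ/2\ZZ$, we have $a^{(0)}=a^+$ and $a^{(1)}=a^-$ for every $a\in\la{g}$, and \eqref{eqn:projab} collapses (only the $j=1$ term survives) to
\begin{align}
\lz a(-1)b(-1)\vac\rz = \lz a^+(-1)b^+(-1)\vac\rz - \tfrac12\lz[a^-,b^-](-1)\vac\rz + \tfrac{k}{8}\langle a^-,b^-\rangle\lz\vac\rz.\nonumber
\end{align}
First I would dispose of the last summand of $v$: since $E_\theta\in\la{g}^1$, the vector $E_\theta(-2)\vac$ lies in $V^1$ and hence projects to $0$ by \eqref{eqn:oddzerozhu}. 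So the term $-\tfrac12(2l-1)E_\theta(-2)\vac$ contributes nothing.

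Next I would treat the first sum $\sum_{i=1}^{2l}\tfrac{2l-2i+1}{2l+1}E_\theta(-1)H_i(-1)\vac$ by applying the displayed formula with $a=E_\theta$, $b=H_i$. The term $\lz a^+(-1)b^+(-1)\vac\rz$ vanishes because $E_\theta^+=0$; the scalar term vanishes because $\langle E_\theta,H_i^-\rangle=0$, as $E_\theta$ is a root vector and $H_i^-$ lies in the Cartan, which are orthogonal under the trace form; and the bracket term vanishes because $[E_\theta,H_i^-]=0$. This last point is the crux of this step: a direct computation gives $[E_\theta,H_i]=(\delta_{i,1}+\delta_{i,2l})E_\theta$, an expression invariant under $i\mapsto 2l+1-i$, so $[E_\theta,H_i]=[E_\theta,\nu H_i]=[E_\theta,H_{2l+1-i}]$ and therefore $H_i^-=\tfrac12(H_i-H_{2l+1-i})$ commutes with $E_\theta$. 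Hence the entire first sum projects to $0$.

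It remains to handle the middle sum $\sum_{i=1}^{2l-1}E_{1,i+1}(-1)E_{i+1,2l+1}(-1)\vac$. Setting $a=E_{1,i+1}$ and $b=E_{i+1,2l+1}$, I would show that both correction terms again vanish: using the explicit formula $\nu(E_{p,q})=-(-1)^{p-q}E_{2l+2-q,2l+2-p}$ together with $[E_{p,q},E_{r,s}]=\delta_{qr}E_{p,s}-\delta_{sp}E_{r,q}$ and the trace pairing $\langle E_{p,q},E_{r,s}\rangle\propto\delta_{ps}\delta_{qr}$, I expect that for $1\le i\le 2l-1$ one gets $\langle a^-,b^-\rangle=0$ and $[a^-,b^-]=0$, the only surviving brackets $E_\theta$ and $-E_\theta$ cancelling. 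The projection of the $i$-th summand is then just $\lz a^+(-1)b^+(-1)\vac\rz$, which by \eqref{eqn:zhuanti2} (with $n_1=n_2=0$, hence sign $+1$ and reversed order) equals $b^+a^+=E^+_{i+1,2l+1}E^+_{1,i+1}$ under the identification $A_g(V(\la{g},k))\cong\cU(\la{g}^0)$. Summing over $i$ yields $\lz v\rz=\sum_{i=1}^{2l-1}E^+_{i+1,2l+1}E^+_{1,i+1}$, as claimed.

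I expect the main obstacle to be the two vanishing claims in the last paragraph, since they are the only place where the boundary behaviour of the indices enters: the computation splits into a handful of cases according to whether $i+1$, $2l+1-i$, or $2l+1$ coincide with the relevant matrix indices, and one must check that the surviving $\pm E_\theta$ contributions cancel exactly. Each case is nevertheless a short Kronecker-delta bookkeeping exercise, and everything else reduces cleanly to the general machinery already recorded in \eqref{eqn:projab}, \eqref{eqn:oddzerozhu}, and \eqref{eqn:zhuanti2}.
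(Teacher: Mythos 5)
Your proposal is correct, and on the middle sum it coincides with the paper's argument: specialize \eqref{eqn:projab} to $T=2$, check that $[E^-_{1,i+1},E^-_{i+1,2l+1}]=0$ (the two surviving $\pm E_\theta$ contributions cancel, exactly as you predict --- no degenerate index coincidences actually occur for $1\le i\le 2l-1$) and that $\langle E^-_{1,i+1},E^-_{i+1,2l+1}\rangle=0$, then apply \eqref{eqn:zhuanti2} to obtain $E^+_{i+1,2l+1}E^+_{1,i+1}$. Where you genuinely diverge is the first sum, and your route is in fact the more careful one. The paper disposes of the first and last terms together in one line, asserting that $\sum_{i=1}^{2l}\tfrac{2l-2i+1}{2l+1}E_\theta(-1)H_i(-1)\vac-\tfrac{2l-1}{2}E_\theta(-2)\vac$ lies in $V^1$, hence in $O_\nu(V)$ by \eqref{eqn:oddzerozhu}. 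But only the $E_\theta(-2)\vac$ term is $\nu$-odd: the coefficients $c_i=\tfrac{2l-2i+1}{2l+1}$ satisfy $c_{2l+1-i}=-c_i$, so $h=\sum_i c_iH_i$ is itself $\nu$-odd, the first sum equals $E_\theta(-1)h(-1)\vac$, a product of two odd factors, and is therefore $\nu$-\emph{even}. Thus the paper's parity claim is imprecise for this piece, and your computation --- $E_\theta^+=0$, $[E_\theta,H_i^-]=0$ by the invariance of $[E_\theta,H_i]$ under $i\mapsto 2l+1-i$, and $\langle E_\theta,H_i^-\rangle=0$ since a root vector is trace-orthogonal to the Cartan --- is exactly what is needed to conclude that this even vector nonetheless lies in $O_\nu(V)$. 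So your approach buys a watertight justification of a step the paper glosses over (indeed misstates), at the cost of a little extra bookkeeping. One cosmetic slip: $[E_\theta,H_i]=-(\delta_{i,1}+\delta_{i,2l})E_\theta$, not $+(\delta_{i,1}+\delta_{i,2l})E_\theta$; the sign is immaterial to your argument, since only the symmetry of the coefficient under $i\mapsto 2l+1-i$ is used.
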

\begin{proof}
  First, it is easy to see that:
  $$\sum_{i=1}^{2l}
  \frac{2l-2i+1}{2l+1}E_{\theta}(-1)H_i(-1)\vac-\frac{1}{2}(2l-1)E_\theta(-2)\vac\in
  V(\la{g},\ourlev)^1.$$ Using \eqref{eqn:projab}, we get:
  \begin{align}
    &\sum_{i=1}^{2l-1}\lz E_{1,i+1}(-1)E_{i+1,2l+1}(-1)\vac\rz\\
    &=\sum_{i=1}^{2l-1}\lz E_{1,i+1}^+(-1)E_{i+1,2l+1}^+(-1)\vac\rz
      -\frac{1}{2}\lz [E_{1,i+1}^-,E_{i+1,2l+1}^-](-1)\vac\rz
      -\frac{l+\frac{1}{2}}{8}\langle E_{1,i+1}^-,E_{i+1,2l+1}^-\rangle\lz \vac\rz\\
    &=\sum_{i=1}^{2l-1}\lz E_{1,i+1}^+(-1)E_{i+1,2l+1}^+(-1)\vac\rz
      -\frac{1}{2}\lz [E_{1,i+1}^-,E_{i+1,2l+1}^-](-1)\vac\rz.
  \end{align}
  For $i=1,\dots,2l-1$, we have:
  \begin{align} [E_{1,i+1}^-,E_{i+1,2l+1}^-]&=\frac{1}{4}
    [E_{1,i+1}+(-1)^iE_{2l+1-i,2l+1},E_{i+1,2l+1}+(-1)^{i}E_{1,2l+1-i}]
    =0.
  \end{align}
  Using \eqref{eqn:zhuanti2} for the first term, we get the required
  result.
\end{proof}

\begin{lem}
  Consider $E_{l+1,1}-(-1)^lE_{2l+1,l+1}\in\la{g}^0$.  Let
  \begin{align}
    v_1= 2(E_{l+1,1}-(-1)^lE_{2l+1,l+1})_L\lz v\rz \in\cU(\la{g}^0).
    \label{eqn:v1}
  \end{align}
  Then, we have:
  \begin{align}
    v_1=&(-1)^l\sum_{1\leq i<l} (E_{1,i+1}-(-1)^iE_{2l+1-i,2l+1})(E_{i+1,l+1} -(-1)^{l-i}E_{l+1,2l+1-i})\label{eqn:v1.1}\\
        &+(-1)^l(E_{1,1}-E_{2l+1,2l+1} ) (E_{1,l+1}-(-1)^lE_{l+1,2l+1})\label{eqn:v1.2}\\
        &-\frac{(-1)^l}{2}(E_{1,l+1}-(-1)^lE_{l+1,2l+1})\label{eqn:v1.3}\\
        &+\sum_{l<i\leq 2l-1}(-1)^l(E_{i+1,l+1} -(-1)^{l-i}E_{l+1,2l+1-i})(E_{1,i+1}-(-1)^iE_{2l+1-i,2l+1})\label{eqn:v1.4}.
  \end{align}
\end{lem}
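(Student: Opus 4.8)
The plan is to compute $v_1 = 2\,X_L\lz v\rz = 2[X,\lz v\rz]$ directly, where $X = E_{l+1,1}-(-1)^lE_{2l+1,l+1}\in\la{g}^0$, starting from the formula $\lz v\rz = \sum_{i=1}^{2l-1}E^+_{i+1,2l+1}E^+_{1,i+1}$ of the previous lemma. Since $X_L$ acts as the derivation $[X,\cdot\,]$ on $\cU(\la{g}^0)$, the Leibniz rule gives
\begin{align}
v_1 = 2\sum_{i=1}^{2l-1}\left([X,E^+_{i+1,2l+1}]\,E^+_{1,i+1} + E^+_{i+1,2l+1}\,[X,E^+_{1,i+1}]\right).
\end{align}
The first observation I would record is that, because $\nu(X)=X$ and $\nu$ is a Lie-algebra automorphism, $[X,E^+_{a,b}] = \tfrac12([X,E_{a,b}]+\nu[X,E_{a,b}]) = [X,E_{a,b}]^+$ in the notation of \eqref{eqn:pmnotation}. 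This reduces everything to commutators of $X$ with single elementary matrices, which I evaluate with $[E_{a,b},E_{c,d}]=\delta_{b,c}E_{a,d}-\delta_{d,a}E_{c,b}$.

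Next I would carry out these elementary commutators. The two summands of $X$ contribute disjointly in the relevant range $1\le i\le 2l-1$, and one finds $[X,E_{1,i+1}] = E_{l+1,i+1}-\delta_{i,l}E_{1,1}$ and $[X,E_{i+1,2l+1}] = (-1)^lE_{i+1,l+1}-(-1)^l\delta_{i,l}E_{2l+1,2l+1}$. Taking $+$-projections and using the diagonal identities $(E_{l+1,l+1})^+=0$ and $(E_{a,a})^+=\tfrac12(E_{a,a}-E_{2l+2-a,2l+2-a})$, the $i=l$ term separates cleanly from the rest. Writing $C=E_{1,l+1}-(-1)^lE_{l+1,2l+1}=2E^+_{1,l+1}$ and $D=E_{1,1}-E_{2l+1,2l+1}=2(E_{1,1})^+$, the $i=l$ contribution to $v_1$ becomes $\tfrac{(-1)^l}{2}(DC+CD)$; reordering via the easily checked bracket $[D,C]=C$ turns this into $(-1)^lDC-\tfrac{(-1)^l}{2}C$, which is exactly the lines \eqref{eqn:v1.2} and \eqref{eqn:v1.3}.

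For the terms with $i\ne l$ I would first rewrite the second family using the reflection symmetry $E^+_{2l+2-b,2l+2-a}=-(-1)^{a-b}E^+_{a,b}$ (an immediate consequence of the definition of $\nu$), which after the substitution $i\mapsto 2l-i$ identifies $\sum_{i\ne l}E^+_{i+1,2l+1}E^+_{l+1,i+1}$ with $(-1)^l\sum_{i\ne l}E^+_{1,i+1}E^+_{i+1,l+1}$. Thus the $i\ne l$ part of $v_1$ equals $2(-1)^l\sum_{i\ne l}\bigl(E^+_{i+1,l+1}E^+_{1,i+1}+E^+_{1,i+1}E^+_{i+1,l+1}\bigr)$. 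I would then compute the single bracket $[E^+_{i+1,l+1},E^+_{1,i+1}]=-\tfrac14 C$ (which is valid and $i$-independent for all $i\ne l$) and use it to put the $i<l$ summands into the order of \eqref{eqn:v1.1} and the $i>l$ summands into the order of \eqref{eqn:v1.4}; the spurious multiples of $C$ produced in this reordering carry opposite signs for $i<l$ and $i>l$ and, since there are equally many ($l-1$) indices on each side, cancel in total.

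The main obstacle is organizational rather than conceptual: keeping the bookkeeping of signs, $+$-projections, and operator orderings under control, and in particular recognizing that the noncommutativity of $\cU(\la{g}^0)$ is precisely what produces the ``extra'' linear term \eqref{eqn:v1.3} (through the $i=l$ reordering $[D,C]=C$) while the analogous linear corrections in the $i\ne l$ range cancel against one another. To guard against sign errors I would check the whole computation against the base case $l=1$ (where only the $i=l$ term survives and \eqref{eqn:v1.1}, \eqref{eqn:v1.4} are empty) and against a small case such as $l=2$.
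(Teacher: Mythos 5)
Your proof is correct and takes essentially the same route as the paper's: both apply the derivation $[X,\cdot\,]$ to $\lz v\rz=\sum_i E^+_{i+1,2l+1}E^+_{1,i+1}$, evaluate the elementary commutators, split off the $i=l$ term (which yields \eqref{eqn:v1.2}--\eqref{eqn:v1.3} after one reordering), and use the $\nu$-reflection identity with the reindexing $i\mapsto 2l-i$ so that the linear correction terms from reordering the $i<l$ and $i>l$ ranges cancel in equal numbers. Your packaging via $C$, $D$, $[D,C]=C$ and the $i$-independent bracket $[E^+_{i+1,l+1},E^+_{1,i+1}]=-\tfrac14 C$ is a tidier bookkeeping of exactly the cancellations the paper tracks with its explicit $\pm\tfrac{(-1)^l(l-1)}{2}$ terms, and all of your intermediate identities check out.
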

\begin{proof}
  \begin{align}
    v_1&=2(E_{l+1,1}-(-1)^lE_{2l+1,l+1})_L\lz v\rz \nonumber\\
       &=2\sum_{i=1}^{2l-1}[E_{l+1,1}-(-1)^lE_{2l+1,l+1},E^+_{i+1,2l+1}] E_{1,i+1}^+
         +2\sum_{i=1}^{2l-1}E^+_{i+1,2l+1} [E_{l+1,1}-(-1)^lE_{2l+1,l+1}, E_{1,i+1}^+]\nonumber\\
       &=\sum_{i=1}^{2l-1}[E_{l+1,1}-(-1)^lE_{2l+1,l+1},\, E_{i+1,2l+1}-(-1)^iE_{1,2l+1-i}] E_{1,i+1}^+\nonumber\\
       &\quad+\sum_{i=1}^{2l-1}E^+_{i+1,2l+1} [E_{l+1,1}-(-1)^lE_{2l+1,l+1}, \, E_{1,i+1}-(-1)^iE_{2l+1-i,2l+1}]\nonumber\\
       &=\sum_{i=1}^{2l-1}(-1)^l(-\delta_{i,l}E_{2l+1,2l+1}+\delta_{i,l}E_{1,1} +E_{i+1,l+1} -(-1)^{l-i}E_{l+1,2l+1-i})  E_{1,i+1}^+\nonumber\\
       &\quad+\sum_{i=1}^{2l-1}E^+_{i+1,2l+1} (E_{l+1,i+1}-(-1)^{l-i}E_{2l+1-i,l+1}-\delta_{i,l}E_{1,1}+\delta_{i,l}E_{2l+1,2l+1}).
  \end{align}
  Our aim is to convert the expressions so that elements from
  $\la{n}_+^0$ are to the right.  Note that $\la{n}_+^0$ is spanned by
  $E_{i,j}^+$ for $1\leq i<j\leq 2l+1$.  We split both summations into
  $i<l,i=l,i>l$ parts.  In the first summation, all terms are already
  in this form, but we still rewrite them with a view towards future
  calculations.  The $i<l$ component is:
  \begin{align}
    &\frac{1}{2}\sum_{1\leq i<l}(-1)^l(E_{i+1,l+1} -(-1)^{l-i}E_{l+1,2l+1-i})  (E_{1,i+1}-(-1)^iE_{2l+1-i,2l+1})\nonumber\\
    &=\frac{(-1)^l}{2}\sum_{1\leq i<l}
      \left( (E_{1,i+1}-(-1)^iE_{2l+1-i,2l+1})
      (E_{i+1,l+1} -(-1)^{l-i}E_{l+1,2l+1-i}) 
      -E_{1,l+1}+(-1)^lE_{l+1,2l+1}\right)\nonumber\\
    &=\frac{(-1)^l}{2}\sum_{1\leq i<l}
      (E_{1,i+1}-(-1)^iE_{2l+1-i,2l+1})
      (E_{i+1,l+1} -(-1)^{l-i}E_{l+1,2l+1-i}) \\
    &\quad -\frac{(-1)^l(l-1)}{2}(E_{1,l+1}-(-1)^lE_{l+1,2l+1}).
  \end{align}
  The $i=l$ term is:
  \begin{align}
    \frac{(-1)^l}{2}(E_{1,1}-E_{2l+1,2l+1})(E_{1,l+1}-(-1)^lE_{l+1,2l+1}).
  \end{align}
  We keep the $i>l$ terms unchanged:
  \begin{align}
    \frac{1}{2}\sum_{l<i\leq 2l-1}(-1)^l(E_{i+1,l+1} -(-1)^{l-i}E_{l+1,2l+1-i}) (E_{1,i+1}-(-1)^iE_{2l+1-i,2l+1}).
  \end{align}
  For the second summation, the $i<l$ terms become:
  \begin{align}
    \sum_{1\leq i<l}&E^+_{i+1,2l+1} (E_{l+1,i+1}-(-1)^{l-i}E_{2l+i-1,l+1})\\
    =&\frac{1}{2}\sum_{1\leq i<l}(E_{i+1,2l+1}-(-1)^iE_{1,2l+1-i}) (E_{l+1,i+1}-(-1)^{l-i}E_{2l+1-i,l+1})\nonumber\\
    =&\frac{1}{2}\sum_{1\leq i<l}\left((E_{l+1,i+1}-(-1)^{l-i}E_{2l+1-i,l+1})(E_{i+1,2l+1}-(-1)^iE_{1,2l+1-i})
       -E_{l+1,2l+1}+(-1)^lE_{1,l+1}\right) \nonumber\\
    =&\frac{1}{2}\sum_{l< i\leq 2l-1}(E_{l+1,2l+1-i}-(-1)^{l-i}E_{i+1,l+1})(E_{2l+1-i,2l+1}-(-1)^iE_{1,i+1})
       -E_{l+1,2l+1}+(-1)^lE_{1,l+1} \nonumber\\
    =&\frac{1}{2}\left(\sum_{l< i\leq 2l-1}(-1)^l(E_{i+1,l+1}-(-1)^{l-i}E_{l+1,2l+1-i})(E_{1,i+1}-(-1)^iE_{2l+1-i,2l+1})\right)\nonumber\\
                    &+\frac{(-1)^l(l-1)}{2}(E_{1,l+1}-(-1)^lE_{l+1,2l+1}).
  \end{align}
  The $i=l$ term is:
  \begin{align}
    E^+_{l+1,2l+1} &(-E_{1,1}+E_{2l+1,2l+1})\nonumber\\
                   &=\frac{1}{2}
                     (E_{l+1,2l+1}-(-1)^lE_{1,l+1}) (-E_{1,1}+E_{2l+1,2l+1})\nonumber\\
                   &=\frac{1}{2}
                     (-E_{1,1}+E_{2l+1,2l+1})(E_{l+1,2l+1}-(-1)^lE_{1,l+1}) +
                     \frac{1}{2}(E_{l+1,2l+1}-(-1)^lE_{1,l+1})\nonumber\\
                   &=\frac{(-1)^l}{2}
                     (E_{1,1}-E_{2l+1,2l+1})(E_{1,l+1}-(-1)^lE_{l+1,2l+1}) +
                     \frac{1}{2}(E_{l+1,2l+1}-(-1)^lE_{1,l+1})\nonumber.
  \end{align}
  The $i>l$ term is:
  \begin{align}
    \sum_{l<i\leq 2l-1}&E^+_{i+1,2l+1} (E_{l+1,i+1}-(-1)^{l-i}E_{2l+1-i,l+1})\\
                       &=\frac{1}{2}\sum_{l<i\leq 2l-1}(E_{i+1,2l+1}-(-1)^iE_{1,2l+1-i}) (E_{l+1,i+1}-(-1)^{l-i}E_{2l+1-i,l+1})\\
                       &=\frac{1}{2}\sum_{1\leq i<l} (E_{2l+1-i,2l+1}-(-1)^iE_{1,i+1}) (E_{l+1,2l+1-i}-(-1)^{l-i}E_{i+1,l+1})\\
                       &=\frac{(-1)^l}{2}\sum_{1\leq i<l} (E_{1,i+1}-(-1)^iE_{2l+1-i,2l+1}) (E_{i+1,l+1}-(-1)^{l-i}E_{l+1,2l+1-i})
  \end{align}
  Combining everything, we get the required formula for $v_1$.
\end{proof}

Now we shall get many elements in the weight zero space $\cR_0$.

\begin{thm}\label{thm:ord2polys}
  Let $1\leq j\leq l$. Recall \eqref{eqn:Blgens} and
  \eqref{eqn:elhlfl}.  We have:
  \begin{align}
    -(-1)^j(f_jf_{j-1}\cdots f_1\,f_{j+1}f_{j+2}\cdots f_{l})_L v_1
    &=h_j\left(h_j + 2\sum_{j<i\leq l}h_i + (l-j) - \frac{1}{2}\right)+ \modnz.
      \label{eqn:fjv1}
  \end{align}
\end{thm}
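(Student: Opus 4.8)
The plan is to turn the left-hand side of \eqref{eqn:fjv1} into an honest computation inside $\cU(\la{g}^0)$, carried out throughout modulo the left ideal $\modnz$. First comes the weight bookkeeping: under the realization of Section \ref{sec:ahom} the class $\lz v\rz$ has $\la{h}^0$-weight $2\epsilon_1$, so $v_1$ has weight $\epsilon_1$, while the product $f_jf_{j-1}\cdots f_1\,f_{j+1}\cdots f_l$ has weight $-\sum_{i=1}^l\alpha_i=-\epsilon_1$. Hence the left-hand side is a weight-$0$ element of $\cR$, i.e.\ an element of $\cR_0$, and modulo $\modnz$ it reduces to a polynomial in $\la{h}^0$; this is exactly the object produced by the polynomial extraction preceding Corollary \ref{cor:onetoonepolymod}. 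Since the ${}_L$-action is multiplicative, the operator is the composition of derivations $\ad(f_j)\circ\cdots\circ\ad(f_1)\circ\ad(f_{j+1})\circ\cdots\circ\ad(f_l)$ of $\cU(\la{g}^0)$, applied right-to-left, so that $\ad(f_l)$ acts first and $\ad(f_j)$ last.

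Next I would exploit the derivation (co-Leibniz) structure. Applying a composition of derivations to a product $A\cdot B$ of two elements of $\la{g}^0$ yields the sum, over all order-preserving splittings of the chain $(\ad(f_l),\dots,\ad(f_{j+1}),\ad(f_1),\dots,\ad(f_j))$ into a sub-chain acting on $A$ and a sub-chain acting on $B$, of the products of the two resulting iterated brackets; each iterated bracket of $f_i$'s against a single elementary-matrix combination is computed directly in the matrix model \eqref{eqn:Blgens}--\eqref{eqn:elhlfl}. I would apply this to each of the quadratic summands of $v_1$ in \eqref{eqn:v1.1}--\eqref{eqn:v1.4}. The reduction modulo $\modnz$ then prunes the sum drastically: writing a surviving term as $P\cdot Q$ with $P,Q\in\la{g}^0$ weight-homogeneous, it contributes to the final Cartan polynomial only if either (i) both $P,Q\in\la{h}^0$, giving a degree-two Cartan monomial, or (ii) $Q\in\la{n}_-^0$ and $P\in\la{n}_+^0$, in which case $PQ\equiv[P,Q]\in\la{h}^0$ modulo $\modnz$; in every other case $Q\in\la{n}_+^0$ and the whole term lies in $\modnz$. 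Because each factor has a fixed weight and the sub-chain it receives must carry exactly the opposite weight to land in weight $0$, the admissible splittings are essentially unique, so the nominal $2^l$ possibilities collapse to a handful.

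Carrying this out, the degree-two part of the answer comes from the splittings in which one factor receives the empty sub-chain: most transparently, the (Cartan)$\times$(short-root) term \eqref{eqn:v1.2}, where the full chain applied to the weight-$\epsilon_1$ vector $E_{1,l+1}-(-1)^lE_{l+1,2l+1}$ descends through $\epsilon_1-\epsilon_l,\dots,\epsilon_1-\epsilon_{j+1},\epsilon_2-\epsilon_{j+1},\dots$ and terminates, at the last bracket $\ad(f_j)$, in $\pm h_j$; together with the two-root-vector lines \eqref{eqn:v1.1} and \eqref{eqn:v1.4} this assembles the $h_j\!\left(h_j+2\sum_{j<i\le l}h_i\right)$ part. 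The remaining linear-in-$h$ contributions come from the case-(ii) commutators in \eqref{eqn:v1.1},\eqref{eqn:v1.4} (accounting for the $(l-j)$) and from the single-factor line \eqref{eqn:v1.3}, whose scalar $-\tfrac{(-1)^l}{2}$ is the source of the $-\tfrac12$. Summing the four lines, clearing the overall sign $-(-1)^j$, and simplifying gives \eqref{eqn:fjv1}. I would cross-check the result against the small cases $l=1$ and $j=l$, where the chain is short and the formula can be verified by hand.

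The step I expect to be the main obstacle is none of the conceptual points above but the sheer sign- and normalization-bookkeeping: evaluating the iterated brackets requires tracking the $\pm$-projection cross terms $E^{\pm}_{a,b}$ and the alternating signs $(-1)^i$ in \eqref{eqn:v1.1}--\eqref{eqn:v1.4}, normalizing correctly at the short end (the distinction between $h_l=H_l+H_{l+1}$ and $\overline{h_l}=2(H_l+H_{l+1})$, and the $\sqrt2$ in $\overline{e_l},\overline{f_l}$), and performing the PBW reorderings in case (ii) without dropping a contributing splitting. The safeguard is to verify the $\la{h}^0$-weight and the $\cU$-degree at every intermediate stage, which rigidly constrains which brackets and which splittings can be nonzero.
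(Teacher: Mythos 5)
Your overall plan is exactly the paper's proof: apply the composite derivation $(f_jf_{j-1}\cdots f_1\,f_{j+1}\cdots f_l)_L$ (rightmost factor first) to each line of $v_1$, evaluate the iterated brackets in the matrix model, and prune modulo $\modnz$ using the weight constraints; the paper additionally organizes the computation via the $(1+\nu)$-symmetrization and the case split $i<j$, $i=j$, $i>j$, but that is bookkeeping, not a different idea. Your weight accounting ($\lz v\rz$ of weight $2\epsilon_1$, $v_1$ of weight $\epsilon_1$, the chain of weight $-\epsilon_1$), your two surviving patterns (i) and (ii), and your identification of \eqref{eqn:v1.3} as the source of the $-\tfrac12$ all match the paper.

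There is, however, one concrete error in your accounting, and it contradicts your own pruning rule: \eqref{eqn:v1.4} contributes \emph{nothing} --- neither to the quadratic part nor to the $(l-j)$ --- because it lies entirely in $\modnz$ after applying the chain (the paper states exactly this). Indeed, for $l<i\leq 2l-1$ the left factor $E_{i+1,l+1}-(-1)^{l-i}E_{l+1,2l+1-i}$ has $\la{h}^0$-weight $-\epsilon_{2l+1-i}$, so it lies in $\la{n}_-^0$; since each $\ad(f_i)$ only lowers weights, the left factor of every term in the derivation expansion of \eqref{eqn:v1.4} stays in $\la{n}_-^0$, which excludes both of your cases (i) and (ii). The weight-zero constraint then forces the right factor of every surviving term into $\la{n}_+^0$, killing it modulo $\modnz$. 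Consequently the quadratic piece $h_j\bigl(h_j+2\sum_{j<i\leq l}h_i\bigr)$ is assembled from \eqref{eqn:v1.1} (both factors brought down to the Cartan) together with \eqref{eqn:v1.2} (empty sub-chain on the Cartan factor), and the $(l-j)$ comes from the reordering commutators inside \eqref{eqn:v1.1} alone --- one $+H_j$ for each $i$ with $j\leq i<l$. Since your own case analysis, applied consistently, detects and repairs this misattribution, and the actual brackets would come out right upon execution, I regard this as a localized bookkeeping error within an otherwise correct proof plan rather than a wrong approach; but as written, the claim about \eqref{eqn:v1.4} is false and should be corrected.
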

\begin{proof}
  It is not hard to see that for every $1\leq j\leq l$,
  $(f_jf_{j-1}\cdots f_1\,f_{j+1}f_{j+2}\cdots f_{l})_L v_1\in\cR_0$.
	
  Throughout this proof, it will be often beneficial for us to do the
  calculations in $\cU(\la{g})$ or $\cU(\la{g})^0$. Since we are sure
  that the final answer is to be in $\cU(\la{g}^0)$, we will carefully
  omit the terms not in $\cU(\la{g}^0)$ that appear in the
  intermediate steps. Recall that $\la{n}_+^0$ is spanned by
  $E_{i,j}^+$ for $1\leq i<j\leq 2l+1$.

  The calculation corresponding to the term \eqref{eqn:v1.1} is the
  longest and we break it into several steps.

  First, we consider the term $E_{1,i+1}E_{i+1,l+1}$. Let
  $1\leq i<j\leq l$.  We have:
  \begin{align}
    (f_j&f_{j-1}\cdots f_{i+1}\,f_{i}\cdots f_1\,f_{j+1}f_{j+2}\cdots f_{l})_L
          (E_{1,i+1}E_{i+1,l+1})\nonumber\\
        &=(F_jF_{j-1}\cdots F_{i+1}\,F_{i}\cdots F_1\,F_{j+1}F_{j+2}\cdots F_{l})_L
          (E_{1,i+1}E_{i+1,l+1})\nonumber\\
        &=(-1)^{l-j}(F_jF_{j-1}\cdots F_{i+1}\,F_{i}\cdots F_1)_L
          (E_{1,i+1}E_{i+1,j+1})\nonumber\\
        &=-(-1)^{l-j}(F_jF_{j-1}\cdots F_{i+1})_L
          (H_iE_{i+1,j+1})\nonumber\\
        &=(-1)^{l-j}H_iH_j+\modn+\label{eqn:top_i<j}
  \end{align}
  Now let $i=j$, but note that we only allow $1\leq i<l$ in
  \eqref{eqn:v1.1}.
  \begin{align}
    (f_j&f_{j-1}\cdots f_1\,f_{j+1}f_{j+2}\cdots f_{l})_L
          (E_{1,j+1}E_{j+1,l+1})\nonumber\\
        &=(F_jF_{j-1}\cdots F_1\,F_{j+1}F_{j+2}\cdots F_{l})_L
          (E_{1,j+1}E_{j+1,l+1})\nonumber\\
        &=(-1)^{l-j}(F_jF_{j-1}\cdots F_1)_L
          (E_{1,j+1}H_{j+1})\nonumber\\
        &=(-1)^{l-j}(F_jF_{j-1}\cdots F_1)_L
          (H_{j+1}E_{1,j+1}+E_{1,j+1})\nonumber\\
        &=-(-1)^{l-j}(H_{j+1}H_j+H_j)+\modn.\label{eqn:top_i=j}
  \end{align}
  Now let $i>j$, again noting that we only allow $1\leq i<l$ in
  \eqref{eqn:v1.1}.
  \begin{align}
    (f_j&f_{j-1}\cdots f_1\,f_{j+1}\cdots f_{i}\,f_{i+1}\cdots f_{l})_L
          (E_{1,i+1}E_{i+1,l+1})\nonumber\\
        &=(F_jF_{j-1}\cdots F_1\,F_{j+1}\cdots F_{i}\,F_{i+1}\cdots  F_{l})_L
          (E_{1,i+1}E_{i+1,l+1})\nonumber\\
        &=(-1)^{l-i}(F_jF_{j-1}\cdots F_1\,F_{j+1}\cdots F_{i})_L
          (E_{1,i+1}H_{i+1})\nonumber\\
        &=(-1)^{l-i}(F_jF_{j-1}\cdots F_1\,F_{j+1}\cdots F_{i})_L
          (H_{i+1}E_{1,i+1}+E_{1,i+1})\nonumber\\
        &=-(-1)^{l-j}(H_{i+1}H_j+H_j) + \modn.\label{eqn:top_i>j}
  \end{align}
  Note that if we place $i=j$ in \eqref{eqn:top_i>j}, we get
  \eqref{eqn:top_i=j}, thus we combine these two equations.  Combining
  \eqref{eqn:top_i<j}, \eqref{eqn:top_i=j}, \eqref{eqn:top_i>j} for a
  fixed $1\leq j\leq l$, we have:
  \begin{align}
    (&f_jf_{j-1}\cdots f_1\,f_{j+1}f_{j+2}\cdots  f_{l})_L\left((-1)^l\sum_{1\leq i<l}E_{1,i+1}E_{i+1,l+1}+(-1)^lE_{2l+1-i,2l+1}E_{l+1,2l+1-i}\right)\nonumber\\
     &=(-1)^l(f_jf_{j-1}\cdots f_1\,f_{j+1}f_{j+2}\cdots  f_{l})_L\left(\sum_{1\leq i<l}(1+\nu)(E_{1,i+1}E_{i+1,l+1})\right)\nonumber\\
     &=(-1)^l(1+\nu)\sum_{1\leq i<l}(f_jf_{j-1}\cdots f_1\,f_{j+1}f_{j+2}\cdots  f_{l})_L(E_{1,i+1}E_{i+1,l+1})\nonumber\\
     &=
       (-1)^j
       (1+\nu)\left( \sum_{1\leq i<j}H_iH_j - \sum_{j\leq i<l}(H_{i+1}H_j+H_j) +\modn\right).\nonumber\\
     &=(-1)^{j}H_j\left( \sum_{1\leq i<j}H_i - \sum_{j\leq i<l}(H_{i+1}+1) \right)
       +(-1)^{j}H_{2l+1-j}\left( \sum_{1\leq i<j}H_{2l+1-i} - \sum_{j\leq i<l}(H_{2l-i}+1) \right)\nonumber\\
     &\quad\quad+\modn.\label{eqn:v1.1part1}
  \end{align}
  Now we consider the term $E_{1,i+1}E_{l+1,2l+1-i}$.  First let
  $1\leq i<j\leq l$.  We have:
  \begin{align}
    (f_j&\cdots f_{i+1}\,f_i\cdots f_1\,f_{j+1}\cdots f_{l-1} f_l)_L(E_{1,i+1}E_{l+1,2l+1-i})\nonumber\\
        &=(f_j\cdots f_{i+1}\,f_i\cdots f_1\,F_{2l-j}\cdots F_{l+2}F_{l+1})_L(E_{1,i+1}E_{l+1,2l+1-i})\nonumber\\
        &=(f_j\cdots f_{i+1}\,f_i\cdots f_1)_L(E_{1,i+1}E_{2l+1-j,2l+1-i})\nonumber\\
        &=(f_jf_{j-1}\cdots f_{i+1}\,F_i\cdots F_1)_L(E_{1,i+1}E_{2l+1-j,2l+1-i})\nonumber\\
        &=(f_jf_{j-1}\cdots f_{i+1})_L(-H_iE_{2l+1-j,2l+1-i})\nonumber\\
        &=(F_{2l+1-j}F_{2l+2-j}\cdots F_{2l-i})_L(-H_iE_{2l+1-j,2l+1-i})\nonumber\\
        &=-(-1)^{i-j}\,H_iH_{2l+1-j}+\modn.\label{eqn:mix_i<j}
  \end{align}
  Now let $i=j$, but note that $1\leq i<l$.
  \begin{align}
    (f_j&\cdots  f_1\,f_{j+1}\cdots  f_l)_L(E_{1,j+1}E_{l+1,2l+1-j})\nonumber\\
        &=(f_j\cdots  f_1\,F_{2l-j}F_{2l-j-1}\cdots F_{l+1})_L(E_{1,j+1}E_{l+1,2l+1-j})\nonumber\\
        &=(f_j\cdots  f_1)_L(-E_{1,j+1}H_{2l-j})\nonumber\\
        &=(f_j\cdots  f_1)_L(-H_{2l-j}E_{1,j+1})\nonumber\\
        &=(F_jF_{j-1}\cdots F_1)_L(-H_{2l-j}	E_{1,j+1})\nonumber\\
        &=H_{2l-j}H_j+\modn.\label{eqn:mix_i=j}
  \end{align}
  Now let $j<i$, but again note that $1\leq i<l$.
  \begin{align}
    (f_j&\cdots f_1\,f_{j+1}\cdots f_{i}\,f_{i+1}\cdots f_l)_L(E_{1,i+1}E_{l+1,2l+1-i})\nonumber\\
        &=(f_j\cdots f_1\,f_{j+1}\cdots f_{i}\,F_{2l-i}F_{2l-i-1}\cdots F_{l+1})_L(E_{1,i+1}E_{l+1,2l+1-i})\nonumber\\
        &=(f_j\cdots f_1\,f_{j+1}\cdots f_{i})_L(-E_{1,i+1}H_{2l-i})\nonumber\\
        &=(f_j\cdots f_1\,f_{j+1}\cdots f_{i})_L(-H_{2l-i}E_{1,i+1})\nonumber\\
        &=(-1)^{i-j}H_{2l-i}H_j+\modn\label{eqn:mix_i>j}.
  \end{align}
  Again, note that placing $i=j$ in \eqref{eqn:mix_i>j} gets us
  \eqref{eqn:mix_i=j}, thus we combine these two.  Combining
  \eqref{eqn:mix_i<j}, \eqref{eqn:mix_i=j}, \eqref{eqn:mix_i>j}, for a
  fixed $1\leq j\leq l$, we see:
  \begin{align}
    (&f_jf_{j-1}\cdots f_1\,f_{j+1}f_{j+2}\cdots f_l)_L
       \left((-1)^l \sum_{1\leq i<l}-(-1)^{l-i} E_{1,i+1}E_{l+1,2l+1-i}-(-1)^iE_{2l+1-i,2l+1}E_{i+1,l+1}\right)\nonumber\\
     &=(-1)^{i+1}(f_jf_{j-1}\cdots f_1\,f_{j+1}f_{j+2}\cdots f_l)_L
       \left( \sum_{1\leq i<l} E_{1,i+1}E_{l+1,2l+1-i}+(-1)^lE_{2l+1-i,2l+1}E_{i+1,l+1}\right)\nonumber\\
     &=(-1)^{i+1}(f_jf_{j-1}\cdots f_1\,f_{j+1}f_{j+2}\cdots f_l)_L
       \left( \sum_{1\leq i<l} (1+\nu)(E_{1,i+1}E_{l+1,2l+1-i})\right)\nonumber\\
     &=(-1)^{i+1}(1+\nu)
       \left( \sum_{1\leq i<l} (f_jf_{j-1}\cdots f_1\,f_{j+1}f_{j+2}\cdots f_l)_L(E_{1,i+1}E_{l+1,2l+1-i})\right)\nonumber\\
     &=(-1)^{i+1}(1+\nu)
       \left( \sum_{1\leq i<j}-(-1)^{i-j}H_iH_{2l+1-j} + \sum_{j\leq i<l}(-1)^{i-j}H_{2l-i}H_{j} +\cU(\la{g})\la{n}_+\right)\nonumber\\
     &=(-1)^{j}
       \left( \sum_{1\leq i<j}\left(H_iH_{2l+1-j} +H_{2l+1-i}H_{j}\right)
       - \sum_{j\leq i<l}\left(H_{2l-i}H_{j}+H_{i+1}H_{2l+1-j}\right) \right)+\cU(\la{g})\la{n}_+\nonumber\\
     &=(-1)^{j}H_j
       \left( \sum_{1\leq i<j}H_{2l+1-i}- \sum_{j\leq i<l}H_{2l-i} \right)+(-1)^{j}H_{2l+1-j}
       \left( \sum_{1\leq i<j}H_{i}- \sum_{j\leq i<l}H_{i+1} \right)
       +\modn.\label{eqn:v1.1part2}
  \end{align}
  Finally, we put together \eqref{eqn:v1.1part1} and
  \eqref{eqn:v1.1part2}:
  \begin{align}
    (&f_jf_{j-1}\cdots f_1\,f_{j+1}f_{j+2}\cdots f_l)_L\left(
       (-1)^l\sum_{1\leq i<l} (E_{1,i+1}-(-1)^iE_{2l+1-i,2l+1})(E_{i+1,l+1} -(-1)^{l-i}E_{l+1,2l+1-i})\right)\nonumber\\
     &=(-1)^{j}H_j\left( \sum_{1\leq i<j}H_i - \sum_{j\leq i<l}(H_{i+1}+1) \right)
       +(-1)^{j}H_{2l+1-j}\left( \sum_{1\leq i<j}H_{2l+1-i} - \sum_{j\leq i<l}(H_{2l+1-i}+1) \right)\nonumber\\
     &+(-1)^{j}H_j
       \left( \sum_{1\leq i<j}H_{2l+1-i}- \sum_{j\leq i<l}H_{2l-i} \right)+(-1)^{j}H_{2l+1-j}
       \left( \sum_{1\leq i<j}H_{i}- \sum_{j\leq i<l}H_{i+1} \right)
       +\modn\nonumber\\
     &=(-1)^{j}h_j\left( \sum_{1\leq i<j}h_i - \sum_{j<i\leq l}h_{i} -(l-j)\right)+\modn.\label{eqn:v1.1fin}
  \end{align}
  In fact, in the last equality, we may now replace $\modn$ with
  $\modnz$.

  For \eqref{eqn:v1.2} and \eqref{eqn:v1.3}, note that
  \begin{align}
    (&f_jf_{j-1}\cdots f_1\,f_{j+1}f_{j+2}\cdots f_{l})_L (E_{1,l+1}-(-1)^lE_{l+1,2l+1} )\nonumber\\
     &=(f_jf_{j-1}\cdots f_1\,f_{j+1}f_{j+2}\cdots f_{l})_L (E_{1,l+1}+\nu E_{1,l+1} )\nonumber\\
     &=(1+\nu)((f_jf_{j-1}\cdots f_1\,f_{j+1}f_{j+2}\cdots f_{l})_L E_{1,l+1})\nonumber\\
     &=(1+\nu)((F_jF_{j-1}\cdots F_1\,F_{j+1}F_{j+2}\cdots F_{l})_L E_{1,l+1})\nonumber\\
     &=(1+\nu)((-1)^{l+j+1}H_j)=(-1)^{l+j+1}h_j.
  \end{align}
  The effect of applying
  $(f_jf_{j-1}\cdots f_1\,f_{j+1}f_{j+2}\cdots f_{l})_L$ on
  \eqref{eqn:v1.2} is thus:
  \begin{align}
    (f_j&f_{j-1}\cdots f_1\,f_{j+1}f_{j+2}\cdots f_{l})_L \left((-1)^l(E_{1,1}-E_{2l+1,2l+1} ) (E_{1,l+1}-(-1)^lE_{l+1,2l+1})\right)\nonumber\\
        &=(-1)^{j+1}(E_{1,1}-E_{2l+1,2l+1} )h_j + \modnz\nonumber\\
        &=(-1)^{j+1}(h_1+\cdots+h_l)h_j + \modnz.
          \label{eqn:v1.2fin}
  \end{align}
  and for \eqref{eqn:v1.3} we get:
  \begin{align}
    (f_j&f_{j-1}\cdots f_1\,f_{j+1}f_{j+2}\cdots f_{l})_L \left(-\frac{(-1)^l}{2}(E_{1,l+1}-(-1)^lE_{l+1,2l+1})\right)
          =\frac{(-1)^j}{{2}}h_j+ \modnz\label{eqn:v1.3fin}.
  \end{align}

  It is not hard to see that
  $(f_jf_{j-1}\cdots f_1\,f_{j+1}f_{j+2}\cdots f_{l})_L$ applied to
  the terms \eqref{eqn:v1.4} will only yield terms in $\modnz$.

  Combining \eqref{eqn:v1.1fin}, \eqref{eqn:v1.2fin} and
  \eqref{eqn:v1.3fin} we get:
  \begin{align}
    (&f_jf_{j-1}\cdots f_1\,f_{j+1}f_{j+2}\cdots f_l)_Lv_1\nonumber\\
     &=(-1)^{j}h_j\left( \sum_{1\leq i<j}h_i - \sum_{j<i\leq l}h_{i} -(l-j)\right)-(-1)^{j}(h_1+\cdots+h_l)h_j +
       \frac{(-1)^j}{{2}}h_j+ \modnz\nonumber\\
     &=(-1)^{j}\,  h_j\left(-h_j - 2\sum_{j<i\leq l}h_i+ \frac{1}{2}- (l-j)\right) +  \modnz.
  \end{align}
\end{proof}

\begin{rem}\label{rem:importPerseB}
  There is a very illuminating way to write the polynomials in
  \eqref{eqn:fjv1}.  Let $1\leq j<l$. Note that the coroot
  $h_{\epsilon_j+\epsilon_{j+1}}$ is
  $h_j + 2h_{j+1}+\cdots +2h_{l-1}+\overline{h_l}$ which is the same
  as $h_j + 2\sum_{j<i\leq l} h_i$.  In the case $j=l$, we write
  $h_l = \frac{1}{2}\overline{h_l}$. All in all, we see that we have
  got the following polynomials:
  \begin{align}
    p_j &= h_j\left(h_{\epsilon_j + \epsilon_{j+1}}+l-j+\frac{1}{2}\right)\quad \mathrm{for}\,\,1\leq j\leq l-1,\\
    p_l &= \frac{1}{4}\overline{h_l}(\overline{h_l}-1).
  \end{align}
  Observe that $p_i\in \cP_0$ and they are linearly independent. Thus
  $\dim(\cP_0)\geq l$. However, since $\dim(\cP_0)\leq \dim(\cR_0)=l$,
  we in fact have an equality and hence the $p_i$ span $\cP_0$.

  These are exactly the polynomials (up to a factor of $4$ in $p_{l}$)
  obtained by Per\v{s}e \cite{Per-typeB} in relation to the top spaces
  of $B_l$ modules at level $-l+\frac{3}{2}$. Thus, we may immediately
  import relevant results from \cite{Per-typeB} on zero sets of these
  polynomials.
\end{rem}

\begin{thm}\cite[Prop.\ 30]{Per-typeB}\label{thm:htwt}
  For every subset
  $S=\{i_1,i_2,\cdots,i_k\}\subseteq \{1,2,\cdots,l-1\}$ with
  $i_1<\cdots<i_k$, define:
  \begin{align}
    \mu_S&=\sum_{j=1}^k\left(i_j+2\sum_{s=j+1}^k(-1)^{s-j}i_s + (-1)^{k-j+1}\left(l-\frac{1}{2}\right)\right)\omega_{i_j},\label{eqn:wtmu}\\
    \mu_S'&=\omega_l+\sum_{j=1}^k\left(i_j+2\sum_{s=j+1}^k(-1)^{s-j}i_s + (-1)^{k-j+1}\left(l+\frac{1}{2}\right)\right)\omega_{i_j}.\label{eqn:wtmu'}
  \end{align}
  Then, $\{\mu_S,\mu_S'\,|\,S\subset\{1,2,\dots,l-1\}\}$ provides the
  complete list of highest weights of irreducible highest-weight
  $A_{\nu}(L(\ourla,\ourlev))$-modules.
\end{thm}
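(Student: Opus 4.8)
The plan is to read off the classification from Corollary \ref{cor:onetoonepolymod} together with the explicit generators of $\cP_0$ recorded in Remark \ref{rem:importPerseB}. By that corollary, the highest weights of the irreducible highest-weight $A_\nu(L(\ourla,\ourlev))$-modules are precisely those $\lambda\in(\la{h}^0)^\ast$ annihilated by every polynomial in $\cP_0$; and $\cP_0$ is spanned by $p_1,\dots,p_l$, where the overall factor $\tfrac14$ in $p_l$ is irrelevant to its zero set. So the whole problem reduces to describing the common zero locus of $p_1,\dots,p_l$ in $(\la{h}^0)^\ast$.

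To make this concrete I would expand $\lambda=\sum_{i=1}^{l}m_i\epsilon_i$ in the orthonormal basis of \eqref{eqn:horizroots} and evaluate the relevant coroots using \eqref{eqn:rootcorooteqn}: since $\epsilon_j\pm\epsilon_{j+1}$ are long and $\epsilon_l$ is short, one gets $\lambda(h_j)=m_j-m_{j+1}$, $\lambda(h_{\epsilon_j+\epsilon_{j+1}})=m_j+m_{j+1}$ for $1\le j\le l-1$, and $\lambda(\overline{h_l})=2m_l$. Substituting into $p_l$ and into the $p_j$ of \eqref{eqn:fjv1} (where the $\modnz$ terms die on the highest-weight vector), the vanishing conditions become: $m_l\in\{0,\tfrac12\}$; and, for each $1\le j\le l-1$, either the \emph{diagonal} alternative $m_j=m_{j+1}$ or the \emph{anti-diagonal} alternative $m_j+m_{j+1}=j-l+\tfrac12$. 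Thus a common zero is specified by the binary value of $m_l$ together with one diagonal/anti-diagonal choice at each of the $l-1$ remaining nodes.

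I would then solve this system by downward recursion from node $l$, recording by $S=\{i_1<\cdots<i_k\}\subseteq\{1,\dots,l-1\}$ the set of nodes at which the anti-diagonal alternative is taken. Across each maximal block of diagonal nodes the coordinate $m_i$ stays constant, and at each anti-diagonal node it is reflected through the affine constant $j-l+\tfrac12$; telescoping these reflections from $i_k$ down to $i_j$ produces exactly the alternating sums $i_j+2\sum_{s=j+1}^{k}(-1)^{s-j}i_s$, while the seed value $m_l\in\{0,\tfrac12\}$ feeds through the same alternation to yield the terms $(-1)^{k-j+1}(l\mp\tfrac12)$. Re-expressing the resulting coordinates in Dynkin labels via $\lambda(h_{i_j})=m_{i_j}-m_{i_j+1}$ and $\lambda(\overline{h_l})=2m_l$ then reproduces \eqref{eqn:wtmu} for the $m_l=0$ family and \eqref{eqn:wtmu'} (whose extra $\omega_l=\tfrac12(\epsilon_1+\cdots+\epsilon_l)$ comes from $\lambda(\overline{h_l})=1$) for the $m_l=\tfrac12$ family.

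Completeness is then immediate: the choices are free, so there are at most $2^l$ common zeros, and the $2^l$ weights $\{\mu_S,\mu_S'\}$ already account for all of them. The only real labor — and the main obstacle — is the index bookkeeping that turns the recursion into the closed alternating sums, which is a purely combinatorial verification. As Remark \ref{rem:importPerseB} observes, however, $p_1,\dots,p_l$ agree (up to the harmless constant in $p_l$) with the polynomials that \cite{Per-typeB} attaches to the top spaces of $L(B_l,-l+\tfrac32)$, so their common zero set is exactly the content of \cite[Prop.\ 30]{Per-typeB} and may be imported rather than recomputed.
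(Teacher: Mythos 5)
Your proposal is correct and takes essentially the same route as the paper, which gives no independent proof of this statement: reduce the classification to the common zero locus of $\cP_0$ via Corollary \ref{cor:onetoonepolymod}, identify $\cP_0$ with Per\v{s}e's polynomials as in Remark \ref{rem:importPerseB}, and import \cite[Prop.\ 30]{Per-typeB}. Your supplementary diagonal/anti-diagonal recursion is also sound, and it correctly works with the constant $(l-j)-\tfrac{1}{2}$ from \eqref{eqn:fjv1} — so your anti-diagonal condition $m_j+m_{j+1}=j-l+\tfrac{1}{2}$ is the one consistent with \eqref{eqn:wtmu}--\eqref{eqn:wtmu'}, whereas the expression $p_j = h_j\left(h_{\epsilon_j+\epsilon_{j+1}}+l-j+\tfrac{1}{2}\right)$ in Remark \ref{rem:importPerseB} carries an apparent sign typo.
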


\begin{rem}\label{rem:ordsimples}
  In \eqref{eqn:wtmu} and \eqref{eqn:wtmu'}, notice that the
  coefficient of each of the $\omega_{i_j}$ $(j=1,\dots,k)$ is an
  element of $\frac{1}{2}+\ZZ$. This means that we have obtained
  exactly two weights that are dominant integral for $B_l$. These
  correspond to $S$ being the empty set:
  $\mu_\phi=0, \mu_{\phi}'=\omega_l$. These are precisely the highest
  weights of the simple ordinary (i.e., Virasoro mode $L(0)$ acts
  semisimply with finite dimensional weight spaces, and weights are
  bounded from below) $\nu$-twisted modules.
\end{rem}

\section{Admissibility and complete reducibility}

\subsection{Admissibility}
Due to the results in \cite{Li-twisted}, every (weak) $\nu$-twisted
$L(\ourla,\ourlev)$-module is naturally a module for the twisted
affine Lie algebra $A_{2l}^{(2)}$ of level $\ourlev$.  As weights for
$A_{2l}^{(2)}$, the weights obtained in Theorem \ref{thm:htwt} become:
\begin{align}
  \lambda_S = \left(-l-\frac{1}{2}\right)\Lambda_0^c+\mu_S,\quad \lambda_S' = \left(-l-\frac{1}{2}\right)\Lambda_0^c+\mu_S'.
\end{align}
We now prove that these are admissible, see Definition
\ref{def:admissible}.
\begin{thm}
  For every $S\subseteq\{1,2,\dots,l-1\}$, the weights $\lambda_S$ and
  $\lambda_S'$ are admissible for $A_{(2l)}^{(2)}$.
\end{thm}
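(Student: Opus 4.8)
The plan is to verify the two conditions of Definition~\ref{def:admissible} for $\lambda_S$; the argument for $\lambda_S'$ is identical. The guiding idea is that the shifted level $k+h^\vee = -l-\frac12+(2l+1)=l+\frac12$ for $A_{2l}^{(2)}$ coincides with the shifted level $-l+\frac32 + h^\vee_{B_l^{(1)}}=l+\frac12$ of the type-$B$ picture, so that most of the work can be imported from \cite{Per-typeB}. First I would record, using $\rho=(2l+1)\Lambda_0^c+\overline\rho$, that
\[
  \lambda_S+\rho = \Bigl(l+\tfrac12\Bigr)\Lambda_0^c + (\mu_S+\overline\rho),
  \qquad (\lambda_S+\rho,\delta)=l+\tfrac12 .
\]
Then, using \eqref{eqn:rootcorooteqn} and \eqref{eqn:innerprodH*}, I would compute $(\lambda_S+\rho,\alpha^\vee)$ on the three length-families of real roots in \eqref{eqn:rerootsl>1}.

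The key observation is that the map $\alpha+m\delta\mapsto\alpha+m\delta$ identifies, in a positivity-preserving way, the intermediate roots $\{\alpha+m\delta:\alpha\in\Phi^\rl\}$ and short roots $\{\alpha+m\delta:\alpha\in\Phi^\rs\}$ of $A_{2l}^{(2)}$ with all real roots of $B_l^{(1)}$. Because the shifted levels agree, the pairings $(\lambda_S+\rho,\alpha^\vee)$ on these two families equal exactly the corresponding $\rho$-shifted pairings for $\mu_S$ in the $B_l^{(1)}$ theory at level $-l+\frac32$; hence Condition (1) of Definition~\ref{def:admissible} on these roots is immediate from the admissibility of $\mu_S$ proved in \cite{Per-typeB}. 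For Condition (2), the \emph{unshifted} pairings $(\lambda_S,\alpha^\vee)$ differ from their $B_l^{(1)}$ counterparts only by the integers $-2m$ (intermediate) and $-4m$ (short), since the two levels differ by $2$; thus the integral coroots arising from these families coincide with those of $B_l^{(1)}$. Per\v{s}e's Condition (2) then guarantees that their finite parts span $\QQ\{h_1,\dots,h_l\}$ and that at least one of them has a nonzero central component, so together they span $\QQ\{h_0,\dots,h_l\}$ (the reverse inclusion being automatic, as every real coroot lies in $W\cdot\{h_0,\dots,h_l\}$).

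It remains only to treat the long roots $\alpha=2\gamma+(2m+1)\delta$, $\gamma\in\Phi^\rs$, which have no $B_l^{(1)}$ counterpart (indeed $\alpha_0=2(-\epsilon_1)+\delta$ is of this type). Here the decisive point is a denominator count: since $(\alpha,\alpha)=4$,
\[
  (\lambda_S+\rho,\alpha^\vee)=(\mu_S+\overline\rho,\gamma)+\frac{(2m+1)(2l+1)}{4}.
\]
All $\epsilon_i$-coordinates of $\mu_S$ and of $\overline\rho$ lie in $\frac12\ZZ$, so $(\mu_S+\overline\rho,\gamma)\in\frac12\ZZ$, while $(2m+1)(2l+1)$ is a product of two odd integers, whence $\frac{(2m+1)(2l+1)}{4}$ is an odd multiple of $\frac14$. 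The sum is therefore an odd multiple of $\frac14$, in particular never an integer, so Condition (1) holds vacuously on every long root; these roots play no role in Condition (2), already secured above. For $\lambda_S'$ one notes that the extra summand $\omega_l$ contributes $\frac12$ to each $\epsilon_i$-coordinate, so $(\mu_S'+\overline\rho,\gamma)$ still lies in $\frac12\ZZ$ and the same conclusion holds.

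I expect the principal difficulty to be organizational rather than conceptual: setting up the correspondence between the three length-families of $A_{2l}^{(2)}$ real roots and the two length-families of $B_l^{(1)}$, and checking that the level shift by $2$ alters the unshifted coroot pairings only by integers, so that integrality---and hence Condition (2)---transfers intact. The degenerate case $l=1$ (where $\Phi^\rl=\emptyset$, so there are no intermediate roots and the type-$B$ comparison collapses) I would settle directly: the long-root denominator argument applies verbatim, and the short-root pairings can be checked by hand.
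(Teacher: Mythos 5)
Your treatment of condition (1) is essentially the paper's. The paper computes, for $\widetilde{\alpha}=\alpha+m\delta$ with $\alpha\in\Phi^{\rs}\cup\Phi^{\rl}$, the pairing $(\lambda_S+\rho,\widetilde{\alpha}^\vee)=\frac{2}{(\alpha,\alpha)}\left(m\left(l+\frac{1}{2}\right)+(\overline{\rho},\alpha)+(\mu_S,\alpha)\right)$ and imports the non-membership in $\{0,-1,-2,\dots\}$ from \cite[Lem.\ 32]{Per-typeB}, exactly via the matching shifted level $l+\frac{1}{2}$ you identified; and your quarter-integer parity argument on the long roots $2\gamma+(2m+1)\delta$ reproduces the paper's conclusion $(\lambda_S+\rho,\widetilde{\alpha}^\vee)\in\frac{1}{4}+\frac{1}{2}\ZZ$ verbatim. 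Where you genuinely diverge is condition (2): the paper does \emph{not} transfer it from type $B$, but exhibits integral roots by hand --- $\alpha_i$ for $i\notin S$, $\delta-\alpha_{i_j}$ for $i_j\in S$ (pairing $-l-\frac{1}{2}-x_{i_j}\in\ZZ$), $\epsilon_{i_j}-\epsilon_{i_{j+1}+1}$ when $|S|\geq 2$, $\epsilon_{i_1}$ when $|S|=1$, and $\delta-\alpha_l$ when $S=\phi$ --- and checks spanning case by case on $|S|$. Your transfer is sound and arguably cleaner: since $(\alpha+m\delta)^\vee=\alpha^\vee+\frac{2m}{(\alpha,\alpha)}c$ and the normalizations agree in the two pictures ($(\alpha,\alpha)=2$ for $\Phi^{\rl}$, $=1$ for $\Phi^{\rs}$, in both $A_{2l}^{(2)}$ and $B_l^{(1)}$), the sets of $\lambda$-integral coroots from the intermediate and short families literally coincide, as subsets of the identified space $\QQ\{c,h_1,\dots,h_l\}=\QQ\{h_0,\dots,h_l\}$, with the $\tilde{\lambda}_S$-integral coroots of $B_l^{(1)}$; Per\v{s}e's condition (2) then transfers wholesale. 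The paper's route buys explicit named integral roots at the cost of case analysis; yours buys brevity at the cost of having to verify the identification carefully.

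Two repairs are needed in your write-up. First, your intermediate inference --- ``finite parts span $\QQ\{h_1,\dots,h_l\}$ and at least one coroot has nonzero central component, so together they span $\QQ\{h_0,\dots,h_l\}$'' --- is invalid as a schema: for $l=1$ the single vector $h_1+c$ has both properties yet spans only a line. You do not need it: the coincidence of the integral coroot sets (including their central coefficients $\frac{2m}{(\alpha,\alpha)}$, which is the point to make explicit, since you only verified that the scalar pairings differ by the integers $-2m$, $-4m$) transfers the full $(l+1)$-dimensional span directly from Per\v{s}e. Second, the case $l=1$ is not a footnote: Per\v{s}e's type-$B$ lemma is unavailable there, so the short-root positivity ($3m\pm 1\pm 2(\mu,\alpha_1)>0$ for $m>0$, $1+2(\mu,\alpha_1)>0$ for $m=0$) and the exhibition of two independent integral roots (the paper uses $\alpha_1$ and $\delta-\alpha_1$, with pairings $0$ or $1$ and $-3$ or $-4$) must actually be carried out, as the paper does; your long-root argument does apply verbatim there.
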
	
\begin{proof}
  Recall that $\rho = \overline{\rho} + h^\vee\Lambda_0^c$ and observe
  that
\begin{align}
  \lambda_S+\rho=\left(l+\frac{1}{2}\right)\Lambda_0^c+\overline{\rho}+\mu_S,
  \quad 
  \lambda_S'+\rho=\left(l+\frac{1}{2}\right)\Lambda_0^c+\overline{\rho}+\mu_S'. 
\end{align}
First, let $l=1$. Then, the only choice for $S$ is the empty set
$\phi$, and we have two weights, $\mu_\phi=0$,
$\mu_\phi'=\omega_1=\frac{1}{2}{\alpha_1}$. Let $\mu$ be one of these,
and let $\lambda$ be $-\frac{3}{2}\Lambda_0^c+\mu$.

Consider $m\in\ZZ$,
$\widetilde{\alpha}=\pm\alpha_1+m\delta\in\widehat{\Phi}^\rs_+$.  If
$m>0$, then, recalling \eqref{eqn:rootcorooteqn},
\eqref{eqn:innerprodH*},
\begin{align}
  (\lambda+\rho,\widetilde{\alpha}^\vee)=\frac{2}{1}\left(\frac{3}{2}\Lambda_0^c+\frac{1}{2}\alpha_1+\mu,\pm\alpha_1+m\delta\right)
  =3m\pm 1\pm 2(\mu,\alpha_1)>0
\end{align}
since $(\mu,\alpha_1)=0$ or $\frac{1}{2}$. If $m=0$, then,
$\widetilde{\alpha}=\alpha_1$, and
\begin{align}
  (\lambda+\rho,\widetilde{\alpha}^\vee)=\frac{2}{1}\left(\frac{3}{2}\Lambda_0^c+\frac{1}{2}\alpha_1+\mu,\alpha_1\right)
  = 1+ 2(\mu,\alpha_1)>0.
\end{align}
Now let $m\in\ZZ$,
$\widetilde{\alpha}=\pm
2\alpha_1+(2m+1)\delta\in\widehat{\Phi}^\rl_+$.  Necessarily,
$m\geq 0$ and, recalling \eqref{eqn:rootcorooteqn},
\eqref{eqn:innerprodH*},
\begin{align}
  (\lambda+\rho,\widetilde{\alpha}^\vee)=\frac{2}{4}\left(\frac{3}{2}\Lambda_0^c+\frac{1}{2}\alpha_1+\mu,\pm 2\alpha_1+(2m+1)	\delta\right)
  =\frac{3}{4}(2m+1)\pm \frac{1}{2}\pm(\mu,\alpha_1)\not\in\ZZ,
\end{align}
since $(\mu,\alpha_1)=0$ or $\frac{1}{2}$.  Thus the first condition
of admissibility is satisfied.

For the second condition, note that
$\alpha_1,\delta-\alpha_1\in\widehat{\Phi}^\rs$.  We have
$\alpha_1,\delta-\alpha_1\in\widehat{\Delta}^{\re}_{\lambda_\phi}$
since:
\begin{align*}
  (\lambda_\phi,(\alpha_1)^\vee)=2\left(-\frac{3}{2}\Lambda_0^c,\alpha_1\right)=0,\quad 
  (\lambda_\phi,(\delta-\alpha_1)^\vee)=2\left(-\frac{3}{2}\Lambda_0^c,\delta-\alpha_1\right)=-3.
\end{align*}
We have
$\alpha_1,\delta-\alpha_1\in\widehat{\Delta}^{\re}_{\lambda_\phi'}$
since:
\begin{align*}
  (\lambda_\phi',(\alpha_1)^\vee)=2\left(-\frac{3}{2}\Lambda_0^c+\frac{1}{2}\alpha_1,\alpha_1\right)=1,\quad 
  (\lambda_\phi',(\delta-\alpha_1)^\vee)=2\left(-\frac{3}{2}\Lambda_0^c+\frac{1}{2}\alpha_1,\delta-\alpha_1\right)=-4.
\end{align*}

Now, let $l>1$. Most of the work for this case has been already done
in \cite[Lem.\ 32]{Per-typeB}.  Also, as in \cite{Per-typeB}, the
proof for $\lambda_S$ and $\lambda_S'$ is similar, so we only present
the former.

Suppose that $\alpha\in\Phi^{\rs}\cup\Phi^{\rl}$, $m\in\ZZ$ such that
$\widetilde{\alpha}=\alpha+m\delta\in\widehat{\Phi}_{+}^{\rs}\cup\widehat{\Phi}_{+}^{\ri}$.
Then, recalling \eqref{eqn:rootcorooteqn}, \eqref{eqn:innerprodH*}, we
get the following, exactly as in \cite[Eq.\ 12]{Per-typeB}:
\begin{align}
  ( \lambda_S+\rho,\widetilde{\alpha}^\vee)=
  \left(\left(l+\frac{1}{2}\right)\Lambda_0^c +\overline{\rho}+\mu_S,{(\alpha+m\delta)}^\vee\right)=
  \frac{2}{(\alpha,\alpha)}\left(m\left(l+\frac{1}{2}\right)+(\overline{\rho},\alpha)+(\mu_S,\alpha)\right).
\end{align}
In \cite[Lem.\ 32]{Per-typeB}, it was shown that the right-hand side
does not belong to $\{0,-1,-2,\dots\}$.

Now suppose $\alpha=\pm\epsilon_i\in\Phi^{\rs}$ ($i=1,\dots,l$) and
$m\in\ZZ$ such that
$\widetilde{\alpha}=2\alpha+(2m+1)\delta\in\widehat{\Phi}^{\rl}_+$.
We have:
\begin{align}
  ( \lambda_S+\rho,\widetilde{\alpha}^\vee)&=
                                             \frac{2}{4}\left((2m+1)\left(l+\frac{1}{2}\right)+(\overline{\rho}+\mu_S,2\alpha)\right)
                                             =(2m+1)\left(\frac{l}{2}+\frac{1}{4}\right)+(\overline{\rho}+\mu_S,\alpha).
\end{align}
Recalling \eqref{eqn:horizroots}, \eqref{eqn:horizweights}, we see
that $(\mu_S,\alpha)\in \frac{1}{2}\ZZ$.  Recalling
\eqref{eqn:rhol>1}, we see
$(\overline{\rho},\alpha)\in\frac{1}{2}\ZZ$.  Hence,
$( \lambda_S+\rho,\widetilde{\alpha}^\vee)\in \frac{1}{4} +
\frac{1}{2}\ZZ$, and thus not in $\{0,-1,-2,\dots\}$.

The proof for checking the second condition of Definition
\ref{def:admissible} (recall Remark \ref{rem:adm2})) is also similar
to \cite{Per-typeB}.  For $i=1,\dots,k$, denote the coefficient of
$\omega_{i_j}$ in $\mu_S$ by $x_{i_j}\in \frac{1}{2}+\ZZ$.

Using \eqref{eqn:horizroots} and \eqref{eqn:horizweights}, it is easy
to see that for $i\in \{1,\dots,l\}\backslash S$,
$(\lambda_S,\alpha_i^\vee)=(\mu_S,\alpha_i^\vee)=0$.  If $i_j\in S$,
$\delta-\alpha_{i_j}\in \widehat{\Phi}^\ri$.  We have, again using
\eqref{eqn:horizroots} and \eqref{eqn:horizweights}:
\begin{align}
  (\lambda_S,(\delta-\alpha_{i_j})^\vee)=\frac{2}{2}(\lambda_S,\delta-\alpha_{i_j})
  =\left( \left(-l-\frac{1}{2}\right) - (\mu_S,\alpha_{i_j}) \right)=-l-\frac{1}{2}-x_{i_j}\in\ZZ.
\end{align}
Now, if $S=\{i_1,\dots,i_k\}$ has two or more elements, consider
$i_j\in S$ with $j=1,\dots,k-1$.  Note,
$\epsilon_{i_j}-\epsilon_{(i_{j+1}+1)}=\alpha_{i_j}+\alpha_{i_{j}+1}+\alpha_{i_{j}+2}+\cdots+\alpha_{i_{j+1}}\in\widehat{\Phi}^\ri$.
\begin{align}
  (\lambda_S,(\alpha_{i_j}+\alpha_{i_{j}+1}+\alpha_{i_{j}+2}+\cdots+\alpha_{i_{j+1}})^\vee)
  =(\mu_S,\epsilon_{i_j}-\epsilon_{i_{j+1}+1})=x_{i_j}+x_{i_{j+1}}\in\ZZ.
\end{align}
If $S$ has two or more elements, the observations above are enough to
guarantee the second condition of admissibility.  If $S$ has exactly
one element, $S=\{i_1\}$, consider
$\epsilon_i=\alpha_{i_1}+\alpha_{i_1+1}\cdots +\alpha_{l}\in
\widehat{\Phi}^\rs$.  We have:
\begin{align}
  (\lambda_{\{i_1\}},(\alpha_{i_1}+\alpha_{i_1+1}\cdots +\alpha_{l})^\vee)
  =2(\mu_{\{i_1\}},\epsilon_{i_1})=2x_{i_1}\in\ZZ.
\end{align}
This, combined with the other observations is enough to handle the
present case.  Finally, if $S$ is empty, consider
$\delta-\alpha_l=\widehat{\Phi}^\rs$:
\begin{align}
  (\lambda_\phi,(\delta-\alpha_l)^\vee)
  =2(\lambda_\phi,\delta-\alpha_{l})
  =2\left(\left(-l-\frac{1}{2}\right) - (\mu_\phi,\alpha_l) \right)
  =-2l-1\in\ZZ.
\end{align}
\end{proof}

\subsection{Semi-simplicity}
Again, our proofs are parallel to the ones in \cite{AdaMil-sl2},
\cite{Per-typeA}, \cite{Per-typeB} etc., with statements modified to
accommodate the twist.  Recall the notion of category $\sO$ for
representations of affine Kac-Moody algebras, \cite[Ch.\ 9]{Kac-book}.
\begin{thm}(\cite[Thm.\ 4.1]{KacWak-modinv})\label{thm:KWirred} Let
  $\la{g}$ be any affine Lie algebra and let $M$ be a $\la{g}$-module
  from category $\sO$ such that its every irreducible subquotient
  $L(\lambda)$ with highest weight $\lambda$ satisfies:
  \begin{enumerate}
  \item $(\lambda + \rho, \alpha^\vee) \not\in\{-1,-2,\dots,\}$ for
    all $\alpha^\vee\in\widehat{\Delta}^{\vee,\re}_+$ and
  \item $\Re(\lambda+\rho,c)>0$.
  \end{enumerate}
  Then $M$ is completely reducible.
\end{thm}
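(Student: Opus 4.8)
The plan is to reduce complete reducibility to the vanishing of $\mathrm{Ext}^1(L(\mu),L(\lambda))$ between the irreducible subquotients of $M$, and then to obstruct any nonsplit extension using the two hypotheses together with the Kac--Kazhdan determinant formula. (By a standard argument, a module in $\sO$ all of whose f.g.\ submodules are semisimple is semisimple, so this vanishing suffices.) First I would invoke the block decomposition of category $\sO$: the (normalized) Casimir $\Omega$ acts on any module in $\sO$, acting on a highest-weight module $L(\lambda)$ by the scalar $(\lambda+\rho,\lambda+\rho)-(\rho,\rho)$; hence $M$ splits as a direct sum of submodules on which $\Omega$ has a single generalized eigenvalue, and extensions between subquotients with distinct Casimir scalars split automatically. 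Thus it suffices to fix one central character and show that if $L(\lambda)$ and $L(\mu)$ are subquotients with $(\lambda+\rho,\lambda+\rho)=(\mu+\rho,\mu+\rho)$ then $\mathrm{Ext}^1(L(\mu),L(\lambda))=0$; note both $\lambda$ and $\mu$ satisfy hypotheses (1) and (2).

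Next I would analyze a hypothetical nonsplit extension $0\to L(\lambda)\to E\to L(\mu)\to 0$. The contravariant duality functor on $\sO$ (restricted dual with action twisted by the Chevalley involution) fixes each irreducible and gives $\mathrm{Ext}^1(L(\mu),L(\lambda))\cong\mathrm{Ext}^1(L(\lambda),L(\mu))$, so I may assume the larger of the two weights sits on top; say $\mu>\lambda$. The case $\mu=\lambda$ is immediate, since a length-two module with both factors $L(\lambda)$ would require a second $\la{n}_+$-singular vector of top weight and hence split. Since $\mu$ is then the unique maximal weight of $E$, occurring with multiplicity one, the weight-$\mu$ space is $\la{n}_+$-singular and generates a highest-weight submodule, which must be all of $E$; thus $E$ is a quotient of the Verma module $M(\mu)$ and $L(\lambda)$ is one of its composition factors with $\lambda<\mu$.

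The crux is now Kac--Kazhdan: the appearance of $L(\lambda)$ as a composition factor of $M(\mu)$ with $\lambda<\mu$ forces a chain $\mu=\xi_0>\xi_1>\cdots>\xi_t=\lambda$ in which each step is $\xi_s=\xi_{s-1}-n_s\beta_s$ for a positive root $\beta_s$ and $n_s\in\ZZ_{>0}$ with $2(\xi_{s-1}+\rho,\beta_s)=n_s(\beta_s,\beta_s)$. I would read off the \emph{final} step: since the dot-action reflection is an involution, $\xi_{t-1}=\lambda+n_t\beta_t$ and $(\lambda+\rho,\beta_t^\vee)=-n_t$. If $\beta_t$ is a positive real root this puts $(\lambda+\rho,\beta_t^\vee)\in\{-1,-2,\dots\}$ for $\beta_t^\vee\in\widehat{\Delta}^{\vee,\re}_+$, directly contradicting hypothesis (1). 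If $\beta_t$ is imaginary, then $(\beta_t,\beta_t)=0$ forces $(\xi_{t-1}+\rho,\delta)=0$; but $\xi_{t-1}-\lambda$ is a multiple of $\delta$, so $(\lambda+\rho,\delta)=0$ as well, and since $\iota(c)=\delta$ we have $(\lambda+\rho,c)=(\lambda+\rho,\delta)=0$, contradicting hypothesis (2). Either way the extension cannot exist, so $\mathrm{Ext}^1$ vanishes and $M$ is completely reducible.

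The part needing the most care is keeping the linkage \emph{direction} straight: conditions (1) and (2) control what can sit \emph{above} $\lambda$, and it is precisely the last rather than the first step of the Kac--Kazhdan chain that produces the forbidden negative-integer coroot pairing for $\lambda$ itself. The other structural point is the well-definedness and local finiteness of $\Omega$ on $\sO$, which underlies the block decomposition and quietly uses the noncritical-level consequence of (2), namely $\Re(\lambda+\rho,c)=\Re(\lambda+\rho,\delta)>0\neq 0$. Beyond these inputs the argument is the standard one of Kac and Wakimoto.
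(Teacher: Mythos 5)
The paper contains no proof of Theorem \ref{thm:KWirred} to compare yours against: the result is imported verbatim from Kac--Wakimoto \cite{KacWak-modinv}, and the paper's role for it is purely as an input to Theorems \ref{thm:catOss} and \ref{thm:catOrdss}. Judged on its own merits, your blind reconstruction is essentially the standard argument underlying the quoted theorem, and its core is sound: the generalized Casimir gives the block decomposition; a nonsplit length-two subquotient $0\to L(\lambda)\to E\to L(\mu)\to 0$ is forced to be a quotient of the Verma module $M(\mu)$; the Kac--Kazhdan necessity criterion produces a chain whose \emph{last} step yields either $(\lambda+\rho,\beta_t^\vee)=-n_t\in\{-1,-2,\dots\}$ for a positive real coroot $\beta_t^\vee\in\widehat{\Delta}^{\vee,\re}_+$, contradicting hypothesis (1), or an imaginary step $\beta_t\in\ZZ_{>0}\delta$ giving $(\lambda+\rho,\delta)=0$, i.e.\ $(\lambda+\rho,c)=0$, contradicting hypothesis (2). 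Your computation $(\lambda+\rho,\beta_t^\vee)=n_t-n_t(\beta_t,\beta_t^\vee)=-n_t$ is correct, and you correctly identify that it is the final, not the initial, link of the chain that constrains $\lambda$.

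Two small repairs. First, your duality step only exchanges sub and quotient, so it does not by itself reduce to the case $\mu>\lambda$: the two highest weights could be incomparable. But your own machinery disposes of this case --- if $\mu\not\leq\lambda$ then $\mu$ is still a maximal weight of $E$ of multiplicity one and its lift is singular (since $\mu+\alpha_i\leq\lambda$ would force $\mu<\lambda$), so $E$ is again a quotient of $M(\mu)$, and the Kac--Kazhdan chain would force $\lambda<\mu$, contradicting incomparability; hence the extension splits. You should state this rather than appeal to ``the larger of the two.'' Second, your closing remark that well-definedness and local finiteness of $\Omega$ ``quietly uses'' hypothesis (2) is slightly off: the generalized Casimir acts on \emph{every} module in category $\sO$ at any level, critical included, since it preserves the finite-dimensional weight spaces. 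What genuinely needs (2) is the exclusion of the imaginary-root step, and, relatedly, the finiteness required to run your $\mathrm{Ext}$-and-induction reduction --- at critical level, where $(\lambda+\rho,\delta)=0$, a Verma module can have infinitely many composition factors $L(\lambda-n\delta)$, so the ``standard argument'' you invoke for passing from $\mathrm{Ext}^1$-vanishing to complete reducibility of a possibly infinite-length $M\in\sO$ deserves an explicit reference (e.g.\ the Deodhar--Gabber--Kac complete reducibility criterion) rather than a parenthesis. With those patches, your write-up is a faithful reconstruction of the Kac--Wakimoto proof.
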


It is clear that our weights $\lambda_S,\lambda_S'$ for all $l\geq 1$
and $S\subseteq\{1,\dots,l-1\}$ satisfy these conditions.

\begin{thm}(cf.\ \cite[Thm.\ 33]{Per-typeB}) Let $M$ be a weak
  $\nu$-twisted $L(\ourla,\ourlev)$-module that is in category $\sO$
  as a $A_{2l}^{(2)}$-module.  Then, $M$ is completely reducible.
  \label{thm:catOss}
\end{thm}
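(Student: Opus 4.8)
The plan is to deduce the statement from the Kac--Wakimoto complete reducibility criterion of Theorem \ref{thm:KWirred}, after pinning down the possible irreducible subquotients of $M$. Since $M$ lies in category $\sO$ as an $A_{2l}^{(2)}$-module, each of its irreducible subquotients is an irreducible highest-weight module $L(\lambda)$ of level $\ourlev$, these being the simple objects of category $\sO$ for affine Kac--Moody algebras \cite[Ch.\ 9]{Kac-book}.

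The key reduction is to show that $\lambda$ must lie on the list of Theorem \ref{thm:htwt}. Being a weak $\nu$-twisted $L(\ourla,\ourlev)$-module means exactly that the maximal ideal $J(\la{g},\ourlev)$ acts as zero; this condition is inherited by submodules and quotients, so every irreducible subquotient of $M$ is itself a simple $\nu$-twisted $L(\ourla,\ourlev)$-module. By the twisted Zhu correspondence \cite[Thm.\ 7.2]{DongLiMas-twZhu}, its top space is then a simple $A_\nu(L(\ourla,\ourlev))$-module, and Corollary \ref{cor:onetoonepolymod} together with Theorem \ref{thm:htwt} forces the $B_l$-highest weight of this top space to be one of the $\mu_S$ or $\mu_S'$. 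Hence, up to a shift by $\delta$ that affects neither pairing below, the highest weight of every irreducible subquotient is one of $\lambda_S$ or $\lambda_S'$.

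It then remains to check the two hypotheses of Theorem \ref{thm:KWirred} for each such weight. Condition (1) follows at once from admissibility: Definition \ref{def:admissible}(1) gives $(\lambda+\rho,\alpha^\vee)\notin\{0,-1,-2,\dots\}$ for all $\alpha^\vee\in\widehat{\Delta}^{\vee,\re}_+$, which is stronger than the required exclusion of $\{-1,-2,\dots\}$. For condition (2) I would compute the central pairing directly. Writing $\lambda+\rho=(l+\tfrac12)\Lambda_0^c+\overline{\rho}+\mu_S$ and noting, via $a^\vee\widetilde{A}=0$ in \eqref{eqn:ccoeffs}, that every horizontal root (hence $\overline{\rho}$ and $\mu_S$) annihilates $c$, while $\Lambda_0^c(c)=1$, one gets $(\lambda+\rho,c)=l+\tfrac12>0$, so $\Re(\lambda+\rho,c)>0$. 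With both conditions verified, Theorem \ref{thm:KWirred} gives that $M$ is completely reducible.

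I expect the genuinely delicate point to be the middle step, namely the passage from the abstract category-$\sO$ structure of $M$ to the explicit classification: one must be sure that the $L$-module property descends to every irreducible subquotient, so that the Zhu-theoretic list of admissible weights truly exhausts the possible composition factors. Once that is secured, the verification of the Kac--Wakimoto conditions is entirely routine.
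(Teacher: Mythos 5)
Your proof is correct and takes essentially the same route as the paper's: show that every irreducible subquotient of $M$ is itself a simple $\nu$-twisted $L(\ourla,\ourlev)$-module in category $\sO$, so that its highest weight is $\lambda_S$ or $\lambda_S'$ by the Zhu-theoretic classification of Theorem \ref{thm:htwt} and Corollary \ref{cor:onetoonepolymod}, and then apply the Kac--Wakimoto criterion of Theorem \ref{thm:KWirred}. Your added verifications --- that a shift by $\delta$ affects neither pairing, and that $(\lambda+\rho,c)=l+\frac{1}{2}>0$ via $a^\vee\tilde{A}=0$ --- are exactly the details the paper compresses into its one-line remark that the weights satisfy the hypotheses of Theorem \ref{thm:KWirred}.
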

\begin{proof}
  Any irreducible subquotient $L$ of $M$ is also a $\nu$-twisted
  $L(\ourla,\ourlev)$-module that is in category $\sO$ as a
  $A_{2l}^{(2)}$-module.
  Thus, the highest weight of $L$ is $\lambda_S$ or $\lambda_S'$, in
  particular it satisfies the conditions of Theorem \ref{thm:KWirred}.
  So, $M$ is completely reducible as a $A_{2l}^{(2)}$-module, and thus
  completely reducible as a (weak) $\nu$-twisted
  $L(\ourla,\ourlev)$-module.
\end{proof}

\begin{thm}(cf.\ \cite[Lem.\ 26]{Per-typeB}) Let $M$ be an ordinary
  $\nu$-twisted $L(\ourla,\ourlev)$-module. Then, $M$ is in category
  $\sO$ as a $A_{2l}^{(2)}$-module, in particular, $M$ is completely
  reducible.
	\label{thm:catOrdss}
\end{thm}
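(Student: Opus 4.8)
The plan is to verify the three defining conditions of category $\sO$ for an ordinary $\nu$-twisted module $M$ and then invoke Theorem~\ref{thm:catOss} for complete reducibility. Recall that $\la{H}$ is spanned by $h_1,\dots,h_l,c,d$. Since $M$ is ordinary, $L(0)$ acts semisimply with finite-dimensional eigenspaces and spectrum bounded below. On $M$ the element $c$ acts by the scalar $\ourlev$, and $d$ differs from $-L(0)$ by a constant, hence acts semisimply; the zero modes $h_1,\dots,h_l$ preserve each conformal weight space, make it a finite-dimensional $\la{g}^0=B_l$-module, and so act semisimply on it. As these operators mutually commute, $\la{H}$ acts by simultaneously diagonalizable operators, yielding a weight-space decomposition, and every $\la{H}$-weight space sits inside a single conformal weight space, hence is finite-dimensional.

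The key step is to produce finitely many weights whose cones (the sets of $\mu$ with $\lambda-\mu$ a non-negative integer combination of $\alpha_0,\dots,\alpha_l$) cover the set $P(M)$ of weights of $M$. Fix $\Lambda\in P(M)$. Every positive root $\beta$ of $A_{2l}^{(2)}$ satisfies $\beta(d)\ge 0$, with equality exactly for the horizontal roots (since $\alpha_0(d)=1$ and $\alpha_k(d)=0$ for $k\in I$); thus any weight $\mu\ge\Lambda$ has $\mu(d)\ge\Lambda(d)$, i.e.\ conformal weight no larger than that of $\Lambda$. Because conformal weights are bounded below and each conformal weight space is finite-dimensional, only finitely many weights of $M$ are $\ge\Lambda$, so among them there is a maximal one $\Lambda^\ast$. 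Any vector of weight $\Lambda^\ast$ is annihilated by every $e_i$ ($i\in\hI$) — otherwise $\Lambda^\ast+\alpha_i$ would be a strictly larger weight that is still $\ge\Lambda$ — hence is singular and generates a highest-weight $\nu$-twisted $L(\ourla,\ourlev)$-submodule.

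By the twisted Zhu correspondence the top space of this submodule is a highest-weight $A_\nu(L)$-module, so the horizontal part of $\Lambda^\ast$ is one of the weights $\mu_S,\mu_S'$ from Theorem~\ref{thm:htwt}. This top space moreover lies inside a finite-dimensional conformal weight space of $M$, so it is a finite-dimensional $B_l$-module with dominant integral highest weight; by Remark~\ref{rem:ordsimples} the only options are $\mu_\phi=0$ and $\mu_\phi'=\omega_l$. Since the level and the horizontal part of $\Lambda^\ast$ are fixed, the only remaining freedom is the $d$-eigenvalue, and as $\Lambda^\ast$ cannot have conformal weight below the minimum we get $\Lambda^\ast=\lambda_\phi-n\delta$ or $\lambda_\phi'-n\delta$ for some $n\in\ZZ_{\ge 0}$. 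Because $\delta=\alpha_0+2\alpha_1+\dots+2\alpha_l$ is a non-negative combination of the simple roots, this gives $\Lambda\le\Lambda^\ast\le\lambda_\phi$ or $\lambda_\phi'$. Hence $P(M)$ is covered by the two cones at $\lambda_\phi$ and $\lambda_\phi'$, so $M\in\sO$, and Theorem~\ref{thm:catOss} yields complete reducibility.

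The point I expect to require the most care is the combination of the raising argument in the second step with the identification in the third: one must confirm that a singular vector occurring at an \emph{arbitrary} conformal weight genuinely generates a submodule whose top space is controlled by the twisted Zhu algebra $A_\nu(L)$ (so that its horizontal weight is forced into the finite classified list), and that the only surviving ambiguity in $\Lambda^\ast$ is a non-negative $\delta$-shift, which keeps it inside the two cones $\lambda_\phi$, $\lambda_\phi'$.
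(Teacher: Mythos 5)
Your proposal is correct and follows essentially the same route as the paper: both arguments use the finite-dimensional conformal weight spaces to get semisimplicity of the Cartan action, force the highest weight of any highest-weight subquotient to be $\lambda_\phi$ or $\lambda_\phi'$ via Remark~\ref{rem:ordsimples} (dominant integrality from finite-dimensionality of the top space, which is an $A_\nu(L)$-module by the twisted Zhu correspondence), conclude $\mathrm{wt}(M)\subseteq D(\lambda_\phi)\cup D(\lambda_\phi')$, and invoke Theorem~\ref{thm:catOss}. Your write-up merely makes explicit the maximal-weight/singular-vector step that the paper leaves implicit (and note the harmless normalization point that in this realization $d$ is $-2L(0)$ plus a constant, since $e_0$ carries $t^{1/2}$), so there is no substantive divergence.
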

\begin{proof}
  $M$ is a level $\ourlev$ module for $A_{2l}^{(2)}$
  \cite{Li-twisted}, in particular, the central element $c$ of
  $A_{2l}^{(2)}$ acts semi-simply on $M$.  Clearly, every conformal
  weight space of $M$ which is finite dimensional by assumption is a
  module for $\la{h}^0$. Thus, $\la{h}^0$ acts semi-simply on $M$ with
  finite dimensional weight spaces. If $v$ is a highest weight vector
  in $M$ of weight $\lambda\in\la{H}^*$, then the irreducible
  $A_{2l}^{(2)}$ module $L(\lambda)$ is an irreducible subquotient of
  $M$, and hence an ordinary $\nu$-twisted
  $L(\ourla,\ourlev)$-module. $L(\lambda)$ has a finite dimensional
  lowest conformal weight space, in particular, this space is finite
  dimensional irreducible module for $\la{g}^0$.  Thus, $\lambda$ has
  only two choices, $\lambda_\phi$ or $\lambda_{\phi}'$ since
  $\mu_\phi$ and $\mu_\phi'$ are the only dominant integral weights
  for $\la{g}^0$ among the possible highest weights (Remark
  \ref{rem:ordsimples}).  This implies that any weight of $M$ has to
  be dominated by one of $\lambda_\phi$ or $\lambda_{\phi'}$, i.e.,
  $\mathrm{wt}(M)\subseteq D(\lambda_\phi)\cup D(\lambda_\phi')$.
  This proves that $M$ is in category $\sO$ as a
  $A_{2l}^{(2)}$-module. The last assertion is due to Theorem
  \ref{thm:catOss}.
\end{proof}

\bibliographystyle{abbrv}

\providecommand{\oldpreprint}[2]{\textsf{arXiv:\mbox{#2}/#1}}\providecommand{\preprint}[2]{\textsf{arXiv:#1
		[\mbox{#2}]}}

\end{document}